\def\rouge{\textcolor{red}}
\def\bleu{\textcolor{blue}}
\newtheorem*{rep@theorem}{\rep@title}
\newcommand{\newreptheorem}[2]{
\newenvironment{rep#1}[1]{
\def\rep@title{#2 \ref{##1}}
\begin{rep@theorem}}
{\end{rep@theorem}}}
\def\auteur#1{{\sc #1}}
\def\titreref#1{{\em #1}}
\def\vol#1{{\bf #1}}
\newtheorem{question}{\bleu{Question.\!\!}}
\newtheorem{proposition}{\bleu{Proposition}}
\numberwithin{equation}{section}
\numberwithin{thm}{section}
\numberwithin{lemma}{section}
\numberwithin{rmk}{section}
\numberwithin{prop}{section}
\numberwithin{cor}{section}
\numberwithin{defn}{section}
\numberwithin{alg}{section}
\def\mbf#1{{\mathbf #1}}
\newcommand{\stirling}[2]{\genfrac{\{}{\}}{0pt}{}{#1}{#2}}
\renewcommand{\S}{\mathbb{S}}
\newcommand{\comp}{\operatorname{comp}}
\newcommand{\area}{\operatorname{area}}
\newcommand{\dinv}{\operatorname{dinv}}
\newcommand{\rank}{\operatorname{rank}}
\def\define#1{\bleu{\bf{#1}}}
\def\X{\mathbf{x}}
\def\N{\mathbb{N}}
\def\pref#1{{\rm (\ref{#1})}}
\def\qref#1{\bleu{Question-}\ref{#1}}
\DeclareMathOperator{\D}{\mathbf{D}}
\def\Z{\mathbb{Z}}
\def\Q{\mathbb{Q}}
\def\charac{\raise 2pt\hbox{\large$\chi$}}
\def\Id{\mathrm{Id}}
\newdimen\carrelength
\def\jcarre{\jaune{\linethickness{\carrelength}\line(1,0){.85}}}
\def\bleu{\textcolor{blue}}
\def\rouge{\textcolor{red}}
\def\jaune{\textcolor{yellow}}
\def\bleupale#1{{\color{SkyBlue} #1}}
\def\Dyck#1#2{\mathscr{D}_{#1,#2}}
\def\Cat#1#2{{C}_{#1,#2}}
\title[Open Questions]{Open Questions for operators related to Rectangular Catalan Combinatorics}
\author{F.~Bergeron}
\address{\href{http://bergeron.math.uqam.ca}{D\'epartement de Math\'ematiques, Lacim, UQAM.}}
  \email{\href{mailto:bergeron.francois@uqam.ca}{bergeron.francois@uqam.ca}}
  \date{\rouge{\bf March 20, 2016}. This work was supported by NSERC}
\begin{document}


\begin{abstract}
We formulate many open questions regarding the Schur positivity of the effect of interesting operators on symmetric functions, and give supporting evidence for why one should expect such behavior.
\end{abstract}

\maketitle
 \parskip=0pt
{ \setcounter{tocdepth}{1}\parskip=0pt\footnotesize \tableofcontents}
\parskip=8pt  
\parindent=20pt


\section*{Introduction} 
The aim of this text is to present, in a concise manner, a set of open questions relating to operators on symmetric functions that are relevant to rectangular Catalan combinatorics. In some form or another, some of these questions have already been considered, but we thought it good to have them all state together. For sure, several questions are new. On top of giving proofs for several of their specializations, the identities and properties considered here have been thoroughly checked for as large as possible a set of special cases. Hence they are stated with a good degree of confidence.

The effect of these operators appears to be elegantly linked to the combinatorics of rectangular Dyck paths, and associated parking functions. This is explicitly made evident when one specializes one of the parameters, say $t$, to be equal to $1$.  


\section{Elliptic Hall Algebra} \label{sec:elliptic}
In a fashion somewhat similar to how creation operators are used in quantum mechanics, the main actors of our story are operators on symmetric functions that we eventually apply to the simplest symmetric function $1$, aiming at constructing interesting symmetric functions. These operators belong to a realization of the ``positive part'' $\mathcal{E}$ of the ``elliptic Hall algebra''  (see below for more details) as a subalgebra of ${\mathrm{End}}(\Lambda)$, where  
     $$\Lambda=\sum_{d\geq o} \Lambda_d,$$ 
 is the degree-graded algebra of  symmetric functions over the field $\Q(q,t)$. In this context, they are generated by two families of ``well-known'' operators. The first of these is the set of operators that correspond to multiplication by symmetric functions. 
\begin{equation}\label{multiplicateurs}
    (-)\cdot f:\Lambda_d\longrightarrow\Lambda_{d+k},\qquad \hbox{(that is  $g\mapsto g\cdot f$, for $f\in\Lambda_k$)};\end{equation} 
while the second is a family  $\{\D_n\}_{n\in\Z}$ of operators considered (see~\cite{IdPosCon}) in the study of Macdonald polynomials. 
Let us recall that these operators $\D_n$, send degree $d$ symmetric function to degree $d+n$
  \begin{displaymath} \bleu{\D_k:\Lambda_{d} \longrightarrow \Lambda_{d+k} }.\end{displaymath}
They are jointly defined by
the generating function equality  
        \begin{equation}\label{defDn}
          \bleu{\sum_{k=-\infty}^{\infty} \D_k( g(\mbf{x}))\, z^k := g \!\left[\mbf{x} +M/{z}\right]  \sum_{n\geq 0} e_n(\mbf{x})\,(-z)^n},
      \end{equation}
here written using plethystic notation (see~\cite{IdPosCon} for more on this), for any $g(\mbf{x})\in\Lambda_d$, and writing $M=M(q,t)$ for $(1-t)(1-q)$. It may be shown that $\D_0$ is a \define{Macdonald eigenoperator}. This is to say that it affords the (combinatorial) Macdonald polynomials as joint eigenfunctions.
It may also be worth recalling that, for all $k$, we have
   	\begin{equation}
	    \bleu{\D_{k+1}=\frac{1}{M} [\D_k,p_1]},	
\end{equation}	
with $[-,-]$ standing for the usual Lie bracket of operators, and $e_1$ correspond to multiplication by the degree $1$ elementary symmetric function. In other words, all of the $\D_k$ (for $k>0$) are obtained as order $k$ Lie-derivatives, with respect to the operator of multiplication by $p_1/M$. Maybe even better for calculation purposes, we have
 \begin{equation}
     \bleu{\D_{k+j}=\frac{1}{(1-t^j)(1-q^j)} [\D_k,p_j]},
 \end{equation}
 for all $k$ and $j$. Indeed, considering the above operator generating series $\mathcal{D}(z)= \sum_{k=-\infty}^{\infty} \D_k \,z^k$, one may check that
 $$z^j [\mathcal{D}(z),p_j(\mbf{x})]=  (1-t^j)(1-q^j)\,\mathcal{D}(z),$$
simply by calculating that\footnote{Recall that, in plethystic notation, one has   $p_j[\mbf{x}+M/z] =p_j(\mbf{x})+(1-t^j)(1-q^j)/z^j$.}   \begin{eqnarray*}
     z^j  [\mathcal{D}(z),p_j]\, g(\mbf{x}) &=& \sum_{k=-\infty}^{\infty} [\D_k,p_j]( g(\mbf{x}))\, z^k\\
                   &=&z^j\Big(g \!\left[\mbf{x} +M/{z}\right]  p_j \!\left[\mbf{x} +M/{z}\right]-g \!\left[\mbf{x} +M/{z}\right] p_j(\mbf{x})  \Big) 
                             \sum_{n\geq 0} e_n(\mbf{x})\,(-z)^n\\                   
                   &=&z^j\Big(g \!\left[\mbf{x} +M/{z}\right]  (1-t^j)(1-q^j)/z^j  \Big) 
                             \sum_{n\geq 0} e_n(\mbf{x})\,(-z)^n\\                   
                   &=& (1-t^j)(1-q^j)\,\mathcal{D}(z) \, g(\mbf{x}).                  
  \end{eqnarray*}
  In particular, we get $(1-t^j)(1-q^j) D_j= [D_0,p_j]$, reducing the calculation of $D_j$ to that of $D_0$, modulo a single bracket operation.

We recall from \cite{SchiffVassMac} that the positive part $\mathcal{E}$ of the elliptic Hall algebra\footnote{The full algebra is $\Z^2$-graded, but we only need the positive components for our purpose.}   may be realized as a $(\N\times\N)$-graded algebra of operators on $\Lambda$
  $$\bleu{\mathcal{E}=\bigoplus_{(m,n)\in\N^2}\mathcal{E}_{m,n}},$$
with the operators in the homogeneous component $\mathcal{E}_{m,n}$ sending $\Lambda_d$ to $\Lambda_{d+n}$. Special cases of these operators were introduced in~\cite[see Thm 4.4]{IdPosCon}, where relevant properties were also stressed out.
As mentioned previously, generators for $\mathcal{E}$ include the $D_k$ operators, which are considered to be of degree $(1,k)$, as well as the operators of multiplication by symmetric functions lying in $\Lambda_d$, which are considered to be of degree $(0,d)$.
It is also established in \cite{SchiffVassMac} that, for each pair of coprime integers $(a,b)$, 
there are ring monomorphisms  
   $$\bleu{\Theta_{a,b}:\Lambda\longrightarrow\mathcal{E}},$$
explicitly described below, such that $\Theta_{a,b}(\Lambda_d))\subseteq \mathcal{E}_{(ad,bd)}$. In particular, this says that one has commutation of operators belonging to the image of $\Theta_{a,b}$, for any given coprime pair $(a,b)$.
Following~\cite{GorskyNegut}, the easiest way to give an explicit description of these monomorphisms, is to fix the $\Theta_{a,b}$-image of the functions
\begin{equation}\label{defn_qd}
   \bleu{q_d=q_d(\mathbf{x};q,t):=\sum_{j+k=d-1} \left(-qt\right)^{-j} s_{(j\,|\,k)}(\mathbf{x})},
   \end{equation}
 where $s_{(j\,|\,k)}(\mathbf{x})$ stands for the hook indexed Schur symmetric functions, where the hook has one part of size $j+1$, and $k$ parts of $1$.	 
Hence, we clearly have
\begin{eqnarray*}
q_{{1}}(\mbf{x}) &=&s_{{1}}(\mbf{x}), \\
q_{{2}}(\mbf{x}) &=& s_{{11}}  (\mbf{x}) -\frac{1}{q t }\, s_{2}(\mbf{x}),\\
q_{{3}}(\mbf{x}) &=& s_{111} (\mbf{x}) -\frac{1}{q t }\, s_{21}(\mbf{x}) +\frac{1}{q^2t^2}\, s_{3}(\mbf{x}),\\
q_{{4}}(\mbf{x}) &=& s_{1111}(\mbf{x}) -\frac{1}{q t }\, s_{211}(\mbf{x})+\frac{1}{q^2t^2}\, s_{31}(\mbf{x})-\frac{1}{q^3t^3}\, s_{4}(\mbf{x}),\\
q_{{5}}(\mbf{x}) &=&s_{{11111}}  (\mbf{x}) -\frac{1}{q t }\, s_{2111}(\mbf{x})+\frac{1}{q^2t^2}\, s_{311}(\mbf{x})-\frac{1}{q^3t^3}\, s_{41}(\mbf{x})+\frac{1}{q^4t^4}\, s_{5}(\mbf{x}). \end{eqnarray*}
Observe that, when the parameters $q$ and $t$ are such that $qt=1$, then $q_d(\mbf{x})$ specialize to the classical power sum symmetric functions $p_d(\mbf{x})$. Hence the set $\{q_d((\mbf{x})\}_d$ forms an independent algebraic generator set for $\Lambda$.
It will be useful to consider the linear basis of $\Lambda_d$ which is made up of the functions
      $$\bleu{q_\mu(\mathbf{x}):=q_{\mu_1}(\mathbf{x})q_{\mu_2}(\mathbf{x})\cdots q_{\mu_k}(\mathbf{x})},$$
with $\mu=\mu_1\mu_2\cdots\mu_k$ running over the set of integer partitions of $d$. 

For each degree $d$ symmetric function $f_d$, and any coprime pair $(a,b)$, we consider operators  $\Theta_{a,b}(f_d)\in\mathcal{E}_{ad,bd}$.
In other words, assuming that we have described\footnote{See formula~\pref{def_theta}.}  $\Theta_{a,b}(q_d)$, and that the expansion of $f_d$ in the basis $\{q_\mu(\mbf{x})\}_{\mu\vdash d}$ is 
     $$\bleu{f_d(\mathbf{x})=\sum_{\mu\vdash d} c_\mu(q,t)\, q_\mu(\mathbf{x})},$$
then we clearly have
\begin{equation}\label{def_oper}
   \bleu{\Theta_{a,b}(f_d):=\sum_{\mu\vdash d} c_\mu(q,t)\, \Theta_{a,b}(q_\mu)},\qquad \hbox{with}\qquad \bleu{\Theta_{a,b}(q_\mu)=\Theta_{a,b}(q_{\mu_1})\cdots \Theta_{a,b}(q_{\mu_k})},
\end{equation}
observe that we do not worry here about the order in which the operators $\Theta_{a,b}(q_{\mu_i})$ should be applied, since they commute. This fact is assured by the general properties of the elliptic Hall algebra established in \cite{SchiffVassMac} and related papers. These properties insure that all the construction described here make sense. One of the striking implications of the properties of $\mathcal{E}$, see~\cite{compositionalshuffle}, is that, for all $(a,b)$ coprime, all $d\in\N$, and all $f_d$, one has the operator identity
\begin{equation}\label{nabla}
    \bleu{\nabla\,\Theta_{a,b}(f_d)\, \nabla^{-1}=  \Theta_{a+b,b}(f_d)}.
\end{equation}
Hence, using the inverse relation, 
 \begin{equation}\label{nabla_inv}
    \bleu{\nabla^{-1}\,\Theta_{a,b}(f_d)\, \nabla=  \Theta_{a-b,b}(f_d)},
\end{equation}
one may extend $\Theta_{a,b}$ to negative values of $a$. It may then be checked that
\begin{equation}
   \bleu{(-qt)\,\omega^*\ \Theta_{a,b}(f_d)\ \omega^* = \Theta_{-a,b}(\omega^*\,f_d)}.
 \end{equation}
Here $\nabla$ stands for the much-discussed Macdonald eigenoperator, which is such that $\nabla(e_n)$ gives the bigraded Frobenius characteristic of the diagonal coinvariant space of $\S_n$ (see \cite{bergeron} for more on this). We also denoted by $\omega^*$ is the involutive operator that sends $f_d(\mbf{x};q,t)$ to 
  	$$(-1/qt)^{d-1}\,\omega f_d(\mathbf{x};1/q,1/t).$$
Because of ties with representation theory, we are interested in functions $f_d$ such that the application of the operators $\Theta_{a,b}(f_d)$ to the constant symmetric function $1$ gives Schur-positive\footnote{Recall that this means that its Schur function expansion has coefficients in $\N[q,t]$. See Appendix~\ref{appendixA}.} symmetric function, for any coprime $a,b\geq 1$. When this is so, we say that $f_d$ gives rise to \define{Schur-positive operators}, and denote by $f_d^{(a,b)}(\mbf{x};q,t)$ the symmetric function $\Theta_{a,b}(f_d)(1)$.

\subsection*{Definition of the basic operators}\label{def_qd} For coprime $a,b\geq 1$, we now describe how to recursively construct\footnote{This essentially comes from \cite{GorskyNegut}, which in turn is a translation of the results presented in \cite{SchiffVassMac} and related papers.}  basic operators $\Theta_{a,b}(q_d)$ as degree $(ad,bd)$ (non commutative) polynomials in the $\D_0$ and multiplication by $e_1$, respectively considered to be of degree $(1,0)$ and $(0,1)$. We start by writing $Q_{0k}$ for the operator of multiplication by the symmetric function $q_k(\mbf{x})$, and $Q_{k0}$ for the operator $(-1)^k\,D_k$. For $m,n\geq 1$, we then recursively define operators $Q_{mn}$ by the Lie bracket formula
    \begin{equation}
         \bleu{Q_{mn}:=\frac{1}{M}\, [Q_{uv},Q_{kl}]},
     \end{equation}
where we choose $(k,l)$ such that $(m,n)=(k,l)+(u,v)$, with $(k,l)$ and $(u,v)$ lying in $\N^2$,  $l-(k n/m)$ is minimal, and such that
    $$\det\begin{pmatrix} u & v\\ k& l\end{pmatrix}=d,$$
 with $d$ standing for the greatest common divisor of $m$ and $n$. 
 Moreover, if $(m,n)=(ad,bd)$, we ask that $(k,l)$ be chosen to be the same as it would for $(a,b)$. 
 For example, we get the Lie bracket expressions
   $$Q_{43}=\frac{1}{M^6} [[e_{{1}},D_{{0}}],[[e_{{1}},D_{{0}}],[[e_{{1}},D_{{0}}],D_{{0}}]]],$$
 or
  $$Q_{63}=\frac{1}{M^8}[[e_{{1}},D_{{0}}],[[[e_{{1}},D_{{0}}],D_{{0}}],[[[e_{{1}},D_{{0}}],D_
{{0}}],D_{{0}}]]].$$
For sure, the monomials that occur in the expansion of $Q_{mn}$ involve $m$ copies of $D_0$, and $n$ copies of $e_1$. We these operators at hand, we my now define the monomorphisms $\Theta_{a,b}$ by setting
    \begin{equation}\label{def_theta}
        \bleu{\Theta_{a,b}(q_d):=Q_{ad,bd}}.
     \end{equation}
 In particular, our above notation convention makes it so that we may write $q_d^{(a,b)}(\mbf{x};q,t)$ for $Q_{ad,bd}(1)$.

\section{Schur Positive Operators}
More generally, we are interested in symmetric functions $f_d$, here called \define{seeds}, such that we get Schur-positive operators $\Theta_{a,b}(f_d)$. Recall that this means that the symmetric function $f_d^{(a,b)}(\mbf{x};q,t)$ (which is just another way of writing $\Theta_{a,b}(f_d)(1)$) expands with coefficients in $\N[q,t]$ in the Schur function basis. In other terms, we want 
    \begin{equation}\label{condition_schur}
        \bleu{ 0\preceq_sf_d^{(a,b)}(\mbf{x};q,t)}.
     \end{equation}
Chief among the interesting operators of this kind are those for which the seed $f_d(\mbf{x})$ is chosen to be
\begin{enumerate}
\item the elementary symmetric functions $e_d(\mbf{x})$;
\item the suitably normalized complete homogeneous symmetric functions $(-qt)^{1-k}h_d(\mbf{x})$;
\item or more generally, the renormalized Schur functions $({-qt})^{-\iota(\mu)}s_\mu(\mbf{x})$
where, for an integer partition $\mu$ of $d$, we set
      $$\bleu{\iota(\mu):=\sum_{i=1}^{\ell(\mu)} \charac(\mu(i)-i)};$$
 \item for any partition $\mu$, the monomial symmetric functions  $(-1)^{n-\ell(\mu)}\,m_\mu(\mbf{x})$.
\end{enumerate}
Observe that all the above operators coincide for $d=1$, since there is essentially but one seed of degree $1$, up to a constant factor. In other words, the operators only become different when one considers $d>1$.
It immediately follows from the definitions that the resulting operators are linked by the same relations as those  between their seeds. 
\subsection*{The compositional $(ad,bd)$-shuffle conjecture}
  Recall from~\cite{compositionalshuffle} the conjectured combinatorial formula for the effect on $1$ of the operators having as seed the symmetric function $C_\alpha:=\mathbf{C}_\alpha(1)$, where one sets $\mathbf{C}_\alpha:= \mathbf{C}_{a_1}\mathbf{C}_{a_2}\cdots \mathbf{C}_{a_\ell}$, for any composition $\alpha=a_1a_2\ldots a_\ell$ of $d$, with the individual operators $\mathbf{C}_a$ specified by the formula
  $$\bleu{\mathbf{C}_a f[\X] := (-t)^{1-a} f\!\left[\X-{(t-1)}/{(tz)}\right]\, \sum_{m \geq 0} z^m h_m[\X]\, \Big|_{z^a}},$$   
 where $(-)\big|_{z^a}$ means that we take the coefficient of $z^a$ in the series considered.  
In our current notations  the compositional $(ad,bd)$-shuffle conjecture (of ~\cite{compositionalshuffle}) states that  
\begin{equation}\label{shuffle_conjecture}
 \bleu{C_\alpha^{(a,b)}(\mbf{x};q,t) =\sum_\gamma q^{\area(\gamma)} \sum_{\pi} t^{\dinv(\pi)} s_{\comp(\pi)}(\mathbf{x})},
 \end{equation}
where the first sum is over all $(ad,bd)$-Dyck paths that return to the diagonal at positions specified by the composition $\alpha$, and the second is over parking functions whose underlying path is $\gamma$ (Necessary concepts and notations are defined in the appendix). It is known (see~\cite{grojnowski}) that for any given $(ad,bd)$-Dyck path $\gamma$, the summation 
     $$\bleu{\sum_{\pi} t^{\dinv(\pi)} s_{\comp(\pi)}(\mathbf{x})},$$
is a LLT-polynomial, which is Schur positive, hence it follows from \pref{shuffle_conjecture}  that $C_\alpha^{(a,b)}(\mbf{x};q,t)$ is Schur positive, that is
\begin{equation}
     \bleu{0\preceq_s C_\alpha^{(a,b)}(\mbf{x};q,t)}.
\end{equation}
 It is also known that this LLT polynomial specializes to $e_{\rho(\gamma)}(\mbf{x})$ at $t=1$, so that proving \pref{shuffle_conjecture}  would also show that, for all $\alpha$ and all coprime $(a,b)$, 
\begin{equation}
     \bleu{0\leq_{\rouge{e}} C_\alpha^{(a,b)}(\mbf{x};q,1)}.
\end{equation}
In a similar vein, our first open question is:

\begin{question}\label{question_schur}
Can we prove that 
\begin{equation}\label{inegalite_schur}
  \bleu{0\preceq_s (-{qt})^{-\iota(\mu)}s_\mu^{(a,b)}(\mbf{x},q,t)},
\end{equation}
 for all partition $\mu$ of $d$, and all coprime $a,b\geq 1$? Can we explain this in terms of bigraded subrepresentations\footnote{For a clearer statement concerning this, see Section~\ref{section_incl}.} of the $\S_n$-module of generalized diagonal harmonics?
\end{question}
\noindent For example, we have
\begin{eqnarray*}
(-qt)^{-2}s_{{3}}^{(1,2)}(\mathbf{x};q,t)&=& \left( q+t \right) s_{{222}}+s_{{321}}+ \left( {q}^{2}+qt+{t}^{2}+q+t \right) s_{{2211}}\\
&&\qquad + \left( q+t \right) s_{3111}+ \left( q+t \right)  \left( {q}^{2}+t^{2}+q+t \right) s_{{21111}}\\
&&\qquad +\left( {q}^{4}+{q}^{3}t+{q}^{2}{t}^{2}+q{t}^{3}+{t}^{4}+{q}^{2}t+q{t}
^{2} \right) s_{{111111}}
,\\
(-qt)^{-1} s_{{21}}^{(1,2)}(\mathbf{x};q,t)&=& \left( {q}^{2}+qt+{t}^{2} \right) s_{{222}}+ \left( q+t \right)  \left( {q}^{2}+{t}^{2}+q+t \right) s_{{2211}}\\
&&\qquad + \left( q+t \right) s_{{3,21}}+ \left( {q}^{2}+qt+{t}^{2} \right) s_{{3111}}\\
&&\qquad+ \left( {q}^
{4}+{q}^{3}t+{q}^{2}{t}^{2}+q{t}^{3}+{t}^{4}+{q}^{3}+2\,{q}^{2}t+2\,q{
t}^{2}+{t}^{3} \right) s_{{21111}}\\
&&\qquad+ \left( {q}^{2}+qt+{t}^{2}
 \right)  \left( {q}^{3}+{t}^{3}+qt \right) s_{{111111}}.
\end{eqnarray*}
Answering \qref{question_schur} in the positive would settle many previous conjectures.
The special case $\mu=1^d$ (which coincides for both this open question and the one below) corresponds to the known Schur positivity of $\nabla(e_n)$; and for general $b=1$, it is implied by the Shuffle Conjecture~\cite[Conjecture 3.1]{HHLRU}. For $\mu=(d)$, it corresponds to a special case of \cite[Conjecture 3.3]{HMZ}.  For general $\mu$, and $b=1$, it corresponds to \cite[Conjecture I]{IdPosCon}. Indeed, this last assertion follows from~\pref{nabla}. To see it, we apply the operators of identity \pref{nabla} to the constant symmetric function $1$,  to get
\begin{eqnarray}\label{nabla_fonct}
     \bleu{f_d^{(a+b,b)}(\mbf{x};q,t)} &=&{\Theta_{a+b,b}(f_d)(1)}\nonumber\\
          &=&{\nabla\,\Theta_{a,b}(f_d)\nabla^{-1}(1)}\nonumber\\
          &=&{\nabla\,\Theta_{a,b}(f_d)(1)}\nonumber\\
          &=&\bleu{\nabla f_d^{(a,b)}(\mbf{x};q,t)},
 \end{eqnarray}
 for all $(a,b)$ and any seed $f_d$. Then, using the fact that $\Theta_{01}=\Id_\Lambda$, one needs only choose $(a,b)=(0,1)$ and $f_d=({-qt})^{-\iota(\mu)}s_\mu$ to get back the relevant conjecture. It follows that
    \begin{equation}\label{nabla_r_f}
        \bleu{f_d^{(r,1)}(\mbf{x};q,t)=\nabla^r(f_d)}.
   \end{equation}
 and in particular, that
    \begin{equation}\label{nabla_r_en}
        \bleu{e_d^{(r,1)}(\mbf{x};q,t)=\nabla^r(e_d)}.
   \end{equation}
Clearly, if a seed $f_d$ expands positively in the basis
      $$\bleu{{(-qt})^{-\iota(\mu)}s_\mu(\mathbf{x};q,t)},$$
  then the associated $f_d^{(a,b)}(\mathbf{x};q,t)$ will perforce be $s$-positive if \qref{question_schur} is answered positively. Obvious cases include $e_d(\mbf{x})$, $(-qt)^{1-d}h_d(\mbf{x})$, $q_d(\mathbf{x})$ (in view of ~\pref{defn_qd}), and
   $$\bleu{(-1)^{k-1} p_k(\mathbf{x})=\sum_{j+k=d-1} (qt)^j\left(\left(-qt\right)^{-j} s_{(j\,|\,k)}(\mathbf{x})\right)}.$$
Another general family of cases goes as follows,
\begin{question}\label{question_monomiales}
Can we prove that  
\begin{equation}\label{inegalite_monomiale}
   \bleu{0\preceq_s (-1)^{d-\ell(\mu)}\, m_\mu^{(a,b)}(\mbf{x};q,t)},
  \end{equation}
for all partition $\mu$ of $d$, and all coprime $a,b\geq 1$?
\end{question}
\noindent Preferably, this would be explained by introducing adequate bigraded $\S_n$-modules whose bigraded Forbenius characteristic would correspond to these Schur positive expressions. For example, we have
\begin{eqnarray*}
-m_{{21}}^{(1,1)}(\mathbf{x};q,t)&=&  2\,s_{{3}}+ \left( {q}^{2}t+q{t}^{2}+2\,{q}^{2}+2\,qt+2\,{t}^{2}+2\,q+
2\,t \right) s_{{21}}\\
&&\qquad + \left( {q}^{3}t+{q}^{2}{t}^{2}+q{t}^{3}+2\,{q}
^{3}+2\,{q}^{2}t+2\,q{t}^{2}+2\,{t}^{3}+2\,qt \right) s_{{111}}.
\end{eqnarray*}
Once again \qref{question_monomiales} relates to previous conjectures. 
For instance, the case $b=1$ corresponds to Conjecture IV of~\cite{IdPosCon}, which asserts the Schur-positivity of $\nabla^a((-1)^{d-\ell(\mu)}\, m_\mu)$.
For both Inequalities~\pref{inegalite_schur} and \pref{inegalite_monomiale}, we have explicitly checked by explicit computer algebra calculations that we do indeed have Schur-positivity for all possible cases of $\mu\vdash d$ with $1\leq ad,bd\leq 12$.


\section{Schur Inclusions}\label{section_incl}

The following considerations (greatly) extend the second observation of \cite[Conjecture III]{IdPosCon}.  
We now consider $s$-positive difference of operators. From the point of view of representation theory, this corresponds to inclusion of graded $\S_n$-modules. For our current purpose, it is convenient to denote by $f_{m,n}(\mbf{x};q,t)$  the symmetric function $f_d^{(a,b)}(\mbf{x};q,t)$, when $(m,n)=(ad,bd)$ and $d=\gcd(m,n)$. Then, let us write $f_{m,n}(\mbf{x};q,t)\preceq_s g_{m,n}(\mbf{x};q,t)$, if and only if the difference $g_{m,n}(\mbf{x};q,t)-f_{m,n}(\mbf{x};q,t)$ is Schur-positive. Our first  observation\footnote{Experimentally supported by calculating all cases with $m,n\leq 9$.} is that 
\begin{equation}\label{S_Inclusion}
   \bleu{q^\alpha\,e_{m-1,n}(\mbf{x};q,t)\ \preceq_s\  e_{m,n}(\mbf{x};q,t)},
 \end{equation}
where $\alpha=\alpha(m,n)$ is the number of cells between the corresponding staircase paths (see \pref{defstaircase} for the definition of the $(m,n)$-staircase path). At $t=1$, we may explain combinatorially that
 \begin{equation}\label{E_Inclusion}
   \bleu{e_{m,n}(\mbf{x};q,1)-q^\alpha\,e_{m-1,n}(\mbf{x};q,1)\in\N[q][e_1,e_2,\ldots]},
 \end{equation}
since the difference between the right-hand side and left-hand side corresponds to a weighted enumeration of the $(m,n)$-Dyck paths that
cannot be obtained from $(m-1,n)$-Dyck paths by the simple addition of a final horizontal step.
On the other hand, the Schur-positivity of \pref{S_Inclusion} is surprising, since suggests that there is some
``dinv'' weight-correcting injection between $(m-1,n)$-Dyck paths and $(m,n)$-Dyck paths. Such a correction seems far from obvious. 

To state our next observed property, we need to introduce the following linear operator. For a partition $\mu$, let us denote by $\overline{\mu}$ the partition obtained by removing the first column of $\mu$. Then, we set $\overline{s_\mu(\mbf{x})}:=s_{\overline{\mu}}(\mbf{x})$, and extend linearly to all symmetric functions. We this notation at hand, we have observed that, similarly to \pref{E_Inclusion}, we have
\begin{equation}\label{Inclusionco}
   \bleu{q^{\alpha'}\,\overline{e_{m,n-1}(\mbf{x};q,t)}\ \preceq_s\  \overline{e_{m,n}(\mbf{x};q,t)}}.
 \end{equation}
 In this case, much as before, $\alpha'=\alpha'(m,n)$ is the number of integer points between the  $(m,n-1)$-staircase path and the minimal $(m,n)$-staircase.
 For example, we have
  \begin{eqnarray*}
       \overline{e_{4,6}(\mbf{x};q,t)}-q^2\,\overline{e_{4,5}(\mbf{x};q,t)} &=&
(q{t}^{7}+{t}^{8}+{q}^{2}{t}^{5}+q{t}^{6}+{q}^{4}{t}^{2}+{q}^{3}{t}^{3}+2\,{q}^{2}{t}^{4}+q{t}^{5})\,s_0(\mbf{x})\\
&&+t \left( q+t \right)  \left( {t}^{5}+q{t}^{3}+{t}^{4}+{q}^{3}+{q}^{2}t+2\,q{t}^{2}+{t}^{3}+qt \right) s_{{1}}(\mbf{x})\\
&&+t \left( q{t}^{3}+{t}^{4}+{q}^{3}+{q}^{2}t+2\,q{t}^{2}+{t}^{3}+{q}^{2}+2\,qt+{t}^{2} \right) s_{{2}}(\mbf{x})\\
&&+( q{t}^{5}+{t}^{6}+{q}^{2}{t}^{3}+2\,q{t}^{4}+{t}^{5}+{q}^{4}+2\,{q}^{3}t+4\,{q}^{2}{t}^{2}\\
         &&\hskip4.5cm+4\,q{t}^{3}+2\,{t}^{4}+{q}^{2}t+q{t}^{2} )\, s_{{11}}(\mbf{x})\\
&&+ t\left( q+t \right) s_{{3}}(\mbf{x})+ \left( q+t \right)  \left( {t}^{3}+{q}^{2}+qt+2\,{t}^{2}+q+t\right) s_{{21}}(\mbf{x})\\
&&+ \left( {q}^{2}+qt+{t}^{2} \right)  \left( {q}^{3}+{t}^{3}+qt+q+t \right) s_{{111}}(\mbf{x})\\
&&+ \left( q+t \right) s_{{31}}(\mbf{x})+ \left( {q}^{2}+qt+{t}^{2} \right) s_{{22}}(\mbf{x})
\end{eqnarray*}
Among other interesting inequalities, we have
\begin{equation}\label{InclusionHook}
   \bleu{{q}\,(-1)^j s_{(j+1\,|\,k-1)}^{(a,b)}(\mbf{x};q,t)\quad {\preceq_s}\quad (-1)^{j-1}s_{(j\,|\,k)}^{(a,b)}(\mbf{x};q,t)},
 \end{equation}
for   two ``consecutive'' hooks\footnote{Notice that $j$ is the $\iota$-function value of the hook $(j\,|\,k)$.}. 
For instance, the inequalities
    $$-q^3 s_4^{(a,b)}(\mbf{x};q,t)\ \preceq_s\ q^2 s_{31}^{(a,b)}(\mbf{x};q,t)\ \preceq_s\  -q\, s_{211}^{(a,b)}(\mbf{x};q,t)\ \preceq_s\  s_{1111}^{(a,b)}(\mbf{x};q,t),$$
 correspond to the Schur-positive differences
 \begin{eqnarray*}
&&s_{{31}}^{(a,b)}(\mbf{x};q,t) -q\,s_{{4}}^{(a,b)}(\mbf{x};q,t)={t}^{2}s_{{22}}+ts_{{31}}+t \left( {t}^{2}+q+t
 \right) s_{{211}}+{t}^{2} \left( {t}^{2}+q \right) s_{{1111}}\\
&&s_{{211}}^{(a,b)}(\mbf{x};q,t)-q\,s_{{31}}^{(a,b)}(\mbf{x};q,t)={t}^{2}s_{{31}}+t \left( {t}^{2}+q \right) s_{{22
}}+{t}^{2} \left( {t}^{2}+q+t \right) s_{{211}}+{t}^{3} \left( {t}^{2}
+q \right) s_{{1111}}\\
&&s_{{1111}}^{(a,b)}(\mbf{x};q,t)-q\,s_{{211}}^{(a,b)}(\mbf{x};q,t)=s_{{4}}+ \left( {t}^{4}+q{t}^{2}+{q}^{2}+qt+{t}
^{2} \right) s_{{22}}\\
&&\hskip6cm+ \left( {t}^{3}+{q}^{2}+qt+{t}^{2}+q+t \right) s_{{31}}\\
&&\hskip6cm+ \left( {t}^{5}+q{t}^{3}+{t}^{4}+{q}^{3}+2\,{q}^{2}t+2\,q{t}^{
2}+{t}^{3}+qt \right) s_{{211}}\\
&&\hskip6cm+t \left( {t}^{5}+q{t}^{3}+{q}^{3}+{q}^
{2}t+q{t}^{2} \right) s_{{1111}}
 \end{eqnarray*}
 The compositional $(ad,bd)$-shuffle conjecture implies inequality~\pref{InclusionHook}. Indeed, we have the identity (shown in~\cite{IdPosCon}) 
   $$\bleu{(-q)^{j-1}\sum_{\alpha\vdash k} \mathbf{C}_j\mathbf{C}_\alpha(1) = 
             s_{(j\,|\,k)}(\mbf{x})+\frac{1}{q}s_{(j+1\,|\,k-1)}(\mbf{x})}.$$
This also shows that settling the compositional $(ad,bd)$-shuffle conjecture would answer in the affirmative the first part of \qref{question_schur} for any hook shapes (see~\cite[Proposition 5.3]{HMZ}). 

For all $(m,n)$, we have also observed (calculating all cases for $m,n\leq 8$) that the following inequality seems to hold 
  \begin{equation}\label{InclusionEH}
   \bleu{q^\beta\,e_{m-1,n}(\mbf{x};q,t)\ \preceq_s\ (-qt)^{1-d}h_{m,n}(\mbf{x};q,t) },
 \end{equation}
with $\beta=\alpha(m,n)-d+1$, for $d=\gcd(m,n)$. In other words, this is the number of points that lie between the diagonal avoiding $(m,n)$-staircase path, and the  $(m-1,n)$-staircase path. Once again, there seems to be a transpose version of this 
  \begin{equation}\label{InclusionEHco}
   \bleu{q^{\beta'}\,\overline{e_{m,n-1}(\mbf{x};q,t)}\ \preceq_s\ (-qt)^{1-d}\,\overline{h_{m,n}(\mbf{x};q,t)} },
 \end{equation}
 with $\beta'$ defined suitably.
Together with \pref{InclusionHook}, inequality~\pref{InclusionEH} refines the inequality in \pref{S_Inclusion}. Hence we are led to ask the following:
\begin{question}
Can we prove that, for all coprime $a,b\geq 1$, all $j$ and $k$, and all $m,n\geq 1$, inequalities~\pref{InclusionHook} and \pref{InclusionEH} hold?
\end{question}
\noindent As well as
\begin{question}
Can we prove that, for all pair $m,n\geq 1$, inequalities~\pref{Inclusionco} and \pref{InclusionEHco} hold?
\end{question}
\noindent
Preferably, these ``facts'' would be explained in terms of inclusion of bigraded representations.
Observe that, up to applying a sequence of such inclusions, we may include any of the relevant  expressions as subexpressions of $\nabla^a(e_n)=e_{an,n}(\mbf{x};q,t)$ (see~\pref{nabla_r_en})
which is conjectured to give the bigraded Frobenius characteristic of the $\S_n$-module $\mathcal{C}_n^{(a)}$ of the generalized diagonal coinvariant $\S_n$-module\footnote{Recall that the case $a=1$ has been shown to hold in~\cite{haiman}. }. Hence, Schur-positivity of the above differences would imply that we have bigraded-monomorphism between associated $\S_n$ modules, all of which included in $\mathcal{C}_n^{(a)}$, for $a$ large enough. Experiments suggest that these modules are ideals, generated by correctly chosen lowest  degree components.

\subsection*{Transpose sub-symmetry}
Following a somewhat different track, we have another kind of inclusion involving a matrix like ``transposition''. This seems to be a very general phenomenon that we have checked for all positive seeds considered here, as well as in the cases that correspond to the compositional $(ad,bd)$-shuffle conjecture (see~\cite{compositionalshuffle}). The most general question may be coined as follows:

\begin{question}
Can we prove that
   \begin{equation}\label{inclusiontranspose}
        \bleu{\overline{f_d^{(b,a)}(\mbf{x};q,t)} \preceq_s  \overline{f_d^{(a,b)}(\mbf{x};q,t)}},
    \end{equation}
 for all coprime $b\geq a\geq 1$, and any seed $f_d$ that gives rise to Schur positive expressions?
 \end{question}
 \noindent We underline that the functions $f_d^{(b,a)}(\mbf{x};q,t)$ and $f_d^{(a,b)}(\mbf{x};q,t)$ are of different degrees; equal to $ad$ in the first case, and $bd$ in the second. Hence they can only be compared after applying the $\overline{(-)}$ operator, which results in a symmetric function having components of various degrees. For example, we have
 \begin{eqnarray*}
    \overline{e_1^{(5,3)}(\mbf{x};q,t)}&=&\left( q+t \right) s_{2}+ \left( q+t \right) 
 \left( {q}^{2}+{t}^{2}+q+t \right) s_{1}\\
 &&\qquad\qquad 
+ \left( {q}^{4}+{q}^{3}t+{q}^{2}{t}^{2}+q{t}^{3}+{t}^{4}+{q}^{2}t+q{t}
^{2} \right) s_{0}\\
    \overline{e_1^{(3,5)}(\mbf{x};q,t)} &=&
 \left( q+t \right) s_{2}+ \left( q+t \right)  \left( {q}^{2}+{t}^{2}+q+t
 \right) s_{2}\\
 &&\qquad\qquad 
+ \left( {q}^{4}+{q}^{3}t+{q}
^{2}{t}^{2}+q{t}^{3}+{t}^{4}+{q}^{2}t+q{t}^{2} \right) s_{0}\\
  &&\quad + s_{21}+ \left( {q}^{2}+qt+{t}^{2}+q+t \right) s_{11}
\end{eqnarray*}
hence $ \overline{e_1^{(3,5)}(\mbf{x};q,t)}-\overline{e_1^{(5,3)}(\mbf{x};q,t)}=s_{21}+ \left( {q}^{2}+qt+{t}^{2}+q+t \right) s_{11}$.
 
As alluded to above, statement~\pref{inclusiontranspose} has been checked  by explicit computer algebra calculations for all cases involving  either $(-{qt})^{-\iota(\mu)}s_\mu^{(a,b)}(\mbf{x},q,t)$,  $(-1)^{d-\ell(\mu)}\, m_\mu^{(a,b)}(\mbf{x};q,t)$, or $C_\alpha^{(a,b)}(\mbf{x};q,t)$, for all partitions $\mu$, all compositions $\alpha$, and all coprime pairs $(a,b)$ for which the overall degree of the resulting function is at most $12$. Hence it holds for all situations that can be expressed as positive linear combinations of these.

\section{\texorpdfstring{$e$}{e}-Positivity and Specializations at \texorpdfstring{$t=1$, and $t=1+r$}{t}}
 Our next considerations concern an interesting feature of the specialization of the operators at $t=1$. Indeed, the resulting operators appear to be much simpler operators than their general counterpart. Indeed, one observes experimentally\footnote{This will be supported by actual results in the sequel.} that
  \begin{equation}\label{specializationat1}
      \bleu{\Theta_{a,b}(f_d)(g(\mbf{x}))\Big|_{t=1} = f_d^{(a,b)}(\mbf{x};q,1)\cdot g(\mbf{x})}.
  \end{equation}
This states that the effect of the operator $\Theta_{a,b}(f_d)\Big|_{t=1}$ on any symmetric function $g(\mbf{x})$ corresponds to multiplication of $g(\mbf{x})$ by the fixed symmetric function $ f_d^{(a,b)}(\mbf{x};q,1)$. In other words, at $t=1$, the monomorphism $\Theta_{a,b}$ may be considered as graded-algebra homomorphism
     $$\Theta_{a,b}\Big|_{t=1}:\bigoplus_{d\geq 0}\Lambda_d\longrightarrow \bigoplus_{d\geq 0}\Lambda_{bd}.$$
sending $f_d$ to (multiplication by) $f_d^{(a,b)}(\mbf{x};q,1)$. Notice here the ``multiplicative'' shift in grading, $d\mapsto bd$. Implicit in statement \pref{specializationat1} is the ``multiplicativity'' 
    \begin{equation}\label{multiplicativite}
      \bleu{ (f_d\,g_k)^{(a,b)}(\mbf{x};q,1)=f_d^{(a,b)}(\mbf{x};q,1)\,g_k^{(a,b)}(\mbf{x};q,1)}.
  \end{equation}  
Thus all of this would follow from the following:

\begin{question}\label{question_mult} 
Can we prove that the operator $\Theta_{a,b}(f_d)\big|_{t=1}$ operates by multiplication by $f_d^{(a,b)}(\mbf{x};q,1)$, for all seed $f_d$ and all coprime $a,b\geq 1$?
\end{question}
 \noindent Observe that it is clearly sufficient to answer this question for any given family of algebraic generators of $\Lambda$, say $\{q_d\}_{d\in\N}$ or $\{e_d\}_{d\in\N}$. Recall also from~\cite{IdPosCon} that $\widetilde{\nabla}$, the linear operator obtained from $\nabla$ by specializing $t$ to $1$, is multiplicative. Hence, we get the following.
 
\begin{proposition}
   If   $\Theta_{a,b}(f_d)\big|_{t=1}$ operates by multiplication by $f_d^{(a,b)}(\mbf{x};q,1)$, then $\Theta_{a+b,b}(f_d)\big|_{t=1}$ also operates by multiplication by  $f_d^{(a+b,b)}(\mbf{x};q,1)$.
  \end{proposition}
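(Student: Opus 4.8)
The plan is to specialize the conjugation identity~\pref{nabla} at $t=1$ and combine it with the multiplicativity of $\widetilde{\nabla}$. Throughout, for a symmetric function $h$, write $L_h$ for the operator $g\mapsto h\cdot g$ of multiplication by $h$. First I would start from the operator identity~\pref{nabla}, namely $\Theta_{a+b,b}(f_d)=\nabla\,\Theta_{a,b}(f_d)\,\nabla^{-1}$, and set $t=1$ on both sides. Since $\widetilde{\nabla}$ is by definition the $t=1$ specialization of $\nabla$ and is invertible, the specialization of $\nabla^{-1}$ is $\widetilde{\nabla}^{-1}$, so that
\[
   \Theta_{a+b,b}(f_d)\big|_{t=1} = \widetilde{\nabla}\,\big(\Theta_{a,b}(f_d)\big|_{t=1}\big)\,\widetilde{\nabla}^{-1}.
\]
Here one uses that substituting $t=1$ is a ring homomorphism on the matrix entries of these operators, so that it commutes with composition and with inversion.

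Next I would invoke the hypothesis, which says precisely that $\Theta_{a,b}(f_d)\big|_{t=1}=L_{f_d^{(a,b)}(\mbf{x};q,1)}$. The key algebraic observation is then that conjugating a multiplication operator by an invertible multiplicative operator $\phi$ returns a multiplication operator: indeed $\phi\,L_h\,\phi^{-1}=L_{\phi(h)}$, since for every $g$ one computes $\phi\big(h\cdot\phi^{-1}(g)\big)=\phi(h)\,\phi(\phi^{-1}(g))=\phi(h)\cdot g$. Applying this with $\phi=\widetilde{\nabla}$, which is multiplicative by the cited result of~\cite{IdPosCon}, yields
\[
   \Theta_{a+b,b}(f_d)\big|_{t=1} = L_{\widetilde{\nabla}\big(f_d^{(a,b)}(\mbf{x};q,1)\big)}.
\]

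Finally I would identify the seed of this multiplication operator. Specializing the functional identity~\pref{nabla_fonct}, $f_d^{(a+b,b)}(\mbf{x};q,t)=\nabla f_d^{(a,b)}(\mbf{x};q,t)$, at $t=1$ and again using that substitution commutes with the action gives $f_d^{(a+b,b)}(\mbf{x};q,1)=\widetilde{\nabla}\big(f_d^{(a,b)}(\mbf{x};q,1)\big)$. Substituting this into the previous display shows $\Theta_{a+b,b}(f_d)\big|_{t=1}=L_{f_d^{(a+b,b)}(\mbf{x};q,1)}$, which is the claim.

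The only genuine subtlety, and the step I expect to require the most care, is justifying that the $t=1$ specialization is compatible with composition and inversion of these operators, i.e.\ that the relevant matrix entries and coefficients are regular at $t=1$ so that substitution behaves as a ring homomorphism and the specialized $\nabla$ remains invertible. This is exactly what underlies the good definition of $\widetilde{\nabla}$ recalled from~\cite{IdPosCon}, and I would lean on that; once granted, the remainder is the purely formal conjugation computation carried out above.
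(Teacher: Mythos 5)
Your proposal is correct and follows essentially the same route as the paper: specialize the conjugation identity~\pref{nabla} at $t=1$, apply the hypothesis, use the multiplicativity of $\widetilde{\nabla}$ to pull the factor $f_d^{(a,b)}(\mbf{x};q,1)$ out of the conjugation, and identify the result via~\pref{nabla_fonct} at $t=1$. Your abstract formulation $\phi L_h\phi^{-1}=L_{\phi(h)}$ is exactly the inline computation the paper performs, so the two arguments coincide.
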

  \begin{proof}[\bf Proof]
Using~\pref{nabla} and \pref{nabla_fonct} specialized at $t=1$, we calculate that, for any symmetric function $g(\mbf{x})$,
     \begin{eqnarray*}
         \Theta_{a+b,b}(f_d)\big|_{t=1}(g(\mbf{x}))&=&\widetilde{\nabla} \Theta_{a,b}(f_d)\big|_{t=1}\widetilde{\nabla}^{-1} (g(\mbf{x}))\\
         &=&\widetilde{\nabla} \left[f_d^{(a,b)}(\mbf{x};q,1)\cdot \widetilde{\nabla}^{-1} (g(\mbf{x}))\right]\\
         &=&\widetilde{\nabla} (f_d^{(a,b)}(\mbf{x};q,1))\cdot\widetilde{\nabla} (\widetilde{\nabla}^{-1} (g(\mbf{x})))\\
         &=&f_d^{(a+b,b)}(\mbf{x};q,1)\cdot g(\mbf{x}),
    \end{eqnarray*}
    which shows the required property.
      \end{proof}
Observe also that, to answer Question-\ref{question_mult} positively in all instances, we need only show that $\Theta_{a,b}(e_d)\big|_{t=1}$ operates by multiplication. To this end, let us recall 
the following conjectured constant term formula of Negut (see~\cite{NegutShuffle}), 
   $$\bleu{\Theta_{a,b}(e_d)(g(\mbf{x}))=
   {\mathrm{CT}}\!\!\left(\frac{g[\mbf{x}+M\,\sum_{i=1}^m z_i^{-1}]}{\mbf{z}_{m,n}}\prod_{i=1}^{m-1}\frac{z_i}{z_i-qtz_{i+1}}\Omega'[\mbf{x};z_i]\!\!
   \prod_{1\leq i<j\leq m} \frac{(z_i-z_j)(z_i-qtz_j)}{(z_i-qz_j)(z_i-tz_j)} \right)}$$
for the calculation of the operators $\Theta_{a,b}(e_d)$, where the constant term is calculated with respect to the variables $\mbf{z}=z_1,\ldots,z_m$, and
     $$\bleu{\mbf{z}_{m,n}:= \prod_{i=1}^m z_i^{\lfloor i\,n/m\rfloor-\lfloor (i-1)\,n/m \rfloor}}.$$
  We use here the notation
      $$\bleu{\Omega'[\mbf{x};z]:=\sum_{n\geq 0} e_n(\mbf{x})\,z^n}$$
  for the \define{dual Cauchy kernel}\footnote{The Cauchy kernel $\Omega[\mbf{x};z]$, obtained by replacing $e_n(\mbf{x})$ replaced by $h_n(\mbf{x})$, is naturally related to the standard scalar product of symmetric functions.}.

Specializing at $t=1$ this constant term formula, one finds the  following further support for the ``fact'' that our operators have this multiplicative property  at $t=1$.
\begin{proposition}
Let $(m,n)$ be equal to $(ad,bd)$, with $d=\gcd(m,n)$, then Negut's conjecture implies that
    \begin{equation}\label{Eqoperator}
     \bleu{ \Theta_{a,b}(e_d)\big|_{t=1}(g(\mbf{x}))={\mathrm{CT}}\left(\frac{1}{\mbf{z}_{m,n}}\prod_{i=1}^{m-1}\frac{z_i}{z_i-qz_{i+1}}\Omega'[\mbf{x};z_i]\right)\cdot g(\mbf{x})},
 \end{equation}
  \end{proposition}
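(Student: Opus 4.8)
The plan is to substitute $t=1$ directly into Negut's constant term formula and track how each factor of the integrand simplifies; since by definition $\Theta_{a,b}(e_d)\big|_{t=1}(g(\mbf{x}))$ is just $\Theta_{a,b}(e_d)(g(\mbf{x}))$ with $t$ set to $1$, it suffices to show that the right-hand side of Negut's formula collapses, at $t=1$, to the expression in \pref{Eqoperator}. I expect three separate simplifications, each elementary once the single analytic subtlety below is dispatched.

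First I would use $M=M(q,t)=(1-t)(1-q)$, which vanishes at $t=1$: the plethystic argument $\mbf{x}+M\sum_{i=1}^m z_i^{-1}$ then specializes to $\mbf{x}$, so that $g[\mbf{x}+M\sum_i z_i^{-1}]\big|_{t=1}=g(\mbf{x})$, a factor now independent of $\mbf{z}$. Next I would observe that the whole pairwise product collapses, because at $t=1$ each of its factors is
\begin{equation*}
   \frac{(z_i-z_j)(z_i-qtz_j)}{(z_i-qz_j)(z_i-tz_j)}\Bigg|_{t=1}
   =\frac{(z_i-z_j)(z_i-qz_j)}{(z_i-qz_j)(z_i-z_j)}=1 .
\end{equation*}
Finally, the staircase product merely loses its $t$, becoming $\prod_{i=1}^{m-1} z_i/(z_i-qz_{i+1})$, while $\mbf{z}_{m,n}$ and the kernels $\Omega'[\mbf{x};z_i]$ carry no $t$ at all. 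Substituting these three facts and pulling the now $\mbf{z}$-free factor $g(\mbf{x})$ out of the constant term would then yield precisely the right-hand side of \pref{Eqoperator} times $g(\mbf{x})$.

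The hard part will be justifying that the substitution $t=1$ commutes with the constant-term operation ${\mathrm{CT}}$, which is a priori an extraction of a coefficient from an iterated Laurent expansion in $\mbf{z}$ whose coefficients live in $\Q(q,t)$; the worry is a spurious pole at $t=1$. To rule this out I would inspect which factors carry $t$ in a denominator: the staircase denominators $z_i-qtz_{i+1}$ specialize to the nonzero polynomials $z_i-qz_{i+1}$ and cause no trouble, while each pairwise denominator $z_i-tz_j$ is matched inside its own factor by a numerator $z_i-z_j$, so that, by the collapse displayed above, the full pairwise product is regular — indeed identically $1$ — at $t=1$. It then follows that every coefficient of the iterated Laurent expansion of the integrand, and in particular the sought constant-term coefficient, is a rational function of $q,t$ that is regular at $t=1$; since for each fixed total degree in $\mbf{x}$ the extraction is a finite sum, evaluation at $t=1$ commutes with it. This would license the substitution and complete the derivation of \pref{Eqoperator} from Negut's conjecture.
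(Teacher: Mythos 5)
Your proposal is correct and follows essentially the same route as the paper, which obtains the proposition precisely by specializing Negut's constant term formula at $t=1$: the plethystic shift disappears because $M=(1-t)(1-q)$ vanishes, the pairwise product $\prod_{i<j}\frac{(z_i-z_j)(z_i-qtz_j)}{(z_i-qz_j)(z_i-tz_j)}$ collapses to $1$, and the $\mbf{z}$-free factor $g(\mbf{x})$ factors out of $\mathrm{CT}$. Your additional verification that setting $t=1$ commutes with the constant-term extraction (no spurious pole at $t=1$, finiteness of the extraction in each fixed $\mbf{x}$-degree) is a detail the paper leaves implicit, and it is handled correctly.
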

It is noteworthy that a combinatorial argument, discussed in~\cite{compositionalshuffle}, shows that the constant term involved in the right-hand side of~\pref{Eqoperator} corresponds to the enumeration of $(m,n)$-Dyck paths by area and risers, that is 
\begin{equation}\label{conjBGLX}
    \bleu{e_{m,n}(\mbf{x};q,1)={\mathrm{CT}}\left(\frac{1}{\mbf{z}_{m,n}}\prod_{i=1}^{m-1}\frac{z_i}{z_i-qz_{i+1}}\Omega'[\mbf{x}\,z_i]\right)=\sum_{\gamma} q^{\area(\gamma)}\,e_{\rho(\gamma)}(\mbf{x})}, 
 \end{equation}
with the sum running over the set of $(ad,bd)$-Dyck paths. One easily gets a similar constant term formula for the enumeration of  $(m,n)$-Dyck paths with no return to the diagonal, except at both ends. To this end, one simply replaces $\mbf{z}_{m,n}$ by $\mbf{z}_{m,n}/(z_1z_2\cdots z_m)$, and it corresponds (conjecturally) to the specialization at $t=1$ of a constant term formula for $(-q)^{1-d}h_d^{(a,b)}(\mathbf{x};q,t)$.

 Another interesting feature of this specialization at $t=1$ is made apparent for special seeds. Indeed, for these special cases, the symmetric function $ f_d^{(a,b)}(\mbf{x};q,1)$ appears to expand with coefficients in $\N[q]$ in the basis of elementary symmetric functions $e_\mu$, for $\mu$ partitions of $bd$. It is usual to say that they are \define{$e$-positive}. If $f$ is $e$-positive, we write $0\leq_{\rouge{e}} f$. This is clearly stronger than Schur-positivity, since it is classical that each $e_\mu$ is itself Schur-positive.
In fact, an even stronger version of $e$-positivity seems to be at play here, as stated by the following, which has been checked explicitly for all $j+k=d-1$, and all $a,b$ such that $1\leq ad,bd\leq 8$. 

\begin{question}\label{questione_e_pos}
Can we prove that 
\begin{eqnarray*}
       &&\bleu{0\leq_{\rouge{e}} (-1)^{1-k}h_d^{(a,b)}(\mbf{x};q,1+r)},\qquad{\rm and}\\
       &&
         \bleu{{q}\,(-1)^j s_{(j+1\,|\,k-1)}^{(a,b)}(\mbf{x};q,1+r)\ {\leq_{\rouge{e}}}\  (-1)^{j-1}s_{(j\,|\,k)}^{(a,b)}(\mbf{x};q,1+r)},
   \end{eqnarray*}
  for all $j+k=d-1$, and all coprime $a,b\geq 1$?
\end{question}
\noindent Exploiting the transitivity of the order, this implies that $(-1)^{j-1}s_{(j\,|\,k)}^{(a,b)}(\mbf{x};q,1+r)$ itself is $e$-positive, since $h_d(\mbf{x})=s_{(d-1|0)}(\mbf{x})$. This also implies (setting $r=0$) that 
      $$\bleu{0\leq_{\rouge{e}} e_d^{(a,b)}(\mbf{x};q,1)},\qquad {\rm and}\qquad \bleu{0\leq_{\rouge{e}} q_d^{(a,b)}(\mbf{x};q,1)},$$ 
in view of the definition of $q_d(\mbf{x};q,t)$.
 \noindent For example, for the seed $e_d(\mbf{x})$, some explicit values are
\begin{eqnarray*}
e_2^{(1,3)}(\mbf{x};q,1)&=&q^{3}\,e_6(\mbf{x})+q^{2}e_{51}(\mbf{x})+q\,e_{42}(\mbf{x})+e_{33}(\mbf{x}),\\
e_2^{(1,2)}(\mbf{x};q,1)&=&q^{6}\,e_6(\mbf{x})+q^{4}\, (q+1 )\, e_{51}(\mbf{x})+q^{2}\, (q^{2}+2 )\, e_{42}(\mbf{x})\\
\qquad &&+q^{3}e_{411}(\mbf{x})+q^{3}e_{33}(\mbf{x})+q\, (q+2 )\, e_{321}(\mbf{x})+e_{222}(\mbf{x}),\\
e_2^{(2,3)}(\mbf{x};q,1)&=&q^{8}\,e_6(\mbf{x})+q^{5}\, (q^{2}+q+1 )\, e_{51}(\mbf{x})+q^{4} (q^{2}+2 )\, e_{42}(\mbf{x})\\
\qquad &&+q^{3} (q^{2}+q+1 )\, e_{411}(\mbf{x})+q^{2} \,({q}^{3}+2\,{q}^{2}+2\,q+1)\, e_{33}(\mbf{x})\\
\qquad &&+q\, (q^{3}+3\,q^{2}+q+2)\, e_{321}(\mbf{x})+q^{2}e_{3111}(\mbf{x})\\
\qquad &&+q^{2}e_{222}(\mbf{x})+ (q+1 )\, e_{2211}(\mbf{x})^{2}.
\end{eqnarray*}
Now, as discussed in \cite{compositionalshuffle},  the $e$-positive symmetric functions $ f_d^{(a,b)}(\mbf{x};q,1)$ considered often appear to expand as a weighted sum, over combinatorial objects, of powers of $q$ multiplied by some elementary symmetric function, giving a combinatorial explanation why  they are {$e$-positive}. The relevant combinatorial objects are discussed in Appendix~A.

It is interesting to underline the following fact, which reduces the proof of $e$-positivity to the cases where $a\leq b$.
\begin{proposition}
   If $f_d^{(a,b)}(\mbf{x};q,1)$ is $e$-positive, then so is $f_d^{(a+b,b)}(\mbf{x};q,1)$. 
 \end{proposition}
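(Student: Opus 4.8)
The plan is to reduce the statement to the behavior of $\widetilde{\nabla}$ under the shift $(a,b)\mapsto(a+b,b)$, exactly as in the preceding proposition, and then to exploit the multiplicativity of $\widetilde{\nabla}$. First I would specialize identity~\pref{nabla_fonct} at $t=1$, which yields the clean relation
\[
   f_d^{(a+b,b)}(\mbf{x};q,1)=\widetilde{\nabla}\bigl(f_d^{(a,b)}(\mbf{x};q,1)\bigr).
\]
Thus the proposition amounts to showing that $\widetilde{\nabla}$ carries $e$-positive symmetric functions to $e$-positive symmetric functions.

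Next I would use the hypothesis to write $f_d^{(a,b)}(\mbf{x};q,1)=\sum_{\mu} c_\mu(q)\,e_\mu(\mbf{x})$ with all coefficients $c_\mu(q)\in\N[q]$, and then apply $\widetilde{\nabla}$ term by term. Since $\widetilde{\nabla}$ is multiplicative (as recalled from~\cite{IdPosCon}), we have $\widetilde{\nabla}(e_\mu)=\prod_i\widetilde{\nabla}(e_{\mu_i})$, so the problem collapses to the single-row case: it suffices to know that each $\widetilde{\nabla}(e_k)$ is $e$-positive. Granting this, $\widetilde{\nabla}(f_d^{(a,b)}(\mbf{x};q,1))$ becomes an $\N[q]$-linear combination of products of $e$-positive functions; and since a product $e_\lambda\cdot e_\nu=e_{\lambda\cup\nu}$ of $e$-positives is again $e$-positive, while $\N[q]$-linear combinations clearly preserve $e$-positivity, the conclusion follows.

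The one genuine input — and the only point worth pinning down — is the $e$-positivity of $\widetilde{\nabla}(e_k)$. But by~\pref{nabla_r_en} we have $\widetilde{\nabla}(e_k)=e_k^{(1,1)}(\mbf{x};q,1)$, and by the $(k,k)$ case of~\pref{conjBGLX} this equals $\sum_{\gamma}q^{\area(\gamma)}\,e_{\rho(\gamma)}(\mbf{x})$, a sum over ordinary Dyck paths that is manifestly $e$-positive. This particular specialization at $t=1$ of $\nabla(e_k)$ is classical and does not depend on any of the conjectures discussed above, so assembling these three ingredients closes the argument. I do not expect a serious obstacle: unlike Question~\ref{question_mult}, this proposition is purely about transporting $e$-positivity along $\widetilde{\nabla}$, and every tool required — the $t=1$ specialization of~\pref{nabla}, the multiplicativity of $\widetilde{\nabla}$, and the base case $\widetilde{\nabla}(e_k)$ — is already in hand.
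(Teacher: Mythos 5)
Your proof follows essentially the same route as the paper's: specialize \pref{nabla_fonct} at $t=1$ to obtain $f_d^{(a+b,b)}(\mbf{x};q,1)=\widetilde{\nabla}\bigl(f_d^{(a,b)}(\mbf{x};q,1)\bigr)$, expand the hypothesis in the $e$-basis, use the multiplicativity of $\widetilde{\nabla}$ to reduce everything to the $e$-positivity of the single functions $\widetilde{\nabla}(e_k)$, and conclude. The one place where you diverge is the justification of that base case: you invoke \pref{nabla_r_en} together with the $(k,k)$ case of \pref{conjBGLX}, but within this paper the first equality of \pref{conjBGLX} (the identification of $e_{m,n}(\mbf{x};q,1)$ with the constant term) is itself obtained from Negut's \emph{conjectured} formula via \pref{Eqoperator}, so as literally written your argument would be conditional on that conjecture. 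Your parenthetical remark that this specialization is classical and conjecture-free is correct, and that is exactly the point: the $e$-positivity of $\widetilde{\nabla}(e_k)$, like the multiplicativity of $\widetilde{\nabla}$, is a theorem of \cite{IdPosCon}, and citing that result directly --- as the paper does --- is what makes the proof unconditional. With that citation substituted for the appeal to \pref{conjBGLX}, your argument is complete and identical in substance to the paper's.
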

\begin{proof}[\bf Proof] Recall from~\cite{IdPosCon} that on top of $\widetilde{\nabla}$ being is multiplicative, we have that $\widetilde{\nabla}(e_k)$ is $e$-positive. Hence, $\widetilde{\nabla}(e_\lambda)=\prod_{k\in\lambda} \widetilde{\nabla}(e_k)$ is $e$-positive for all $\lambda$, and we get the announced property since we get $f_d^{(a+b,b)}(\mbf{x};q,1)$ by applying $\widetilde{\nabla}$ to the $e$-positive expression $f_d^{(a,b)}(\mbf{x};q,1)$.
\end{proof}

Many other instances of $e$-positivity seem to occur, but they still have to be explained combinatorially. 
A tantalizing fact along these lines, discussed in~\cite[see Prop 2.3.4]{haiman}, is that the expression
$$\bleu{\langle p_1(\mbf{x})^n,e_{n,n}(\mbf{x},1,1+r)\rangle}$$
enumerates connected graphs, $r$-weighted by the number of edges. Extensive experiments, including all cases of degree $\leq 8$, lead to the following.

  \begin{question}\label{Sconjelem}
     Can we prove that, for any partition $\mu$ of $d$, and any coprime $a,b\geq 1$, that 
\begin{equation}\label{eposm}
   \bleu{0\leq_{\rouge{e}} (-1)^{d-\ell(\mu)}\, m_\mu^{(a,b)}(\mathbf{x};q,1+r)},
    \end{equation}
in other words, that the symmetric functions  are $e$-positive? Furthermore, can we explain this $e$-positivity in terms of a combinatorial enumeration in the style of \pref{conjBGLX}?
   \end{question} 
\noindent For example,  we have   
\begin{eqnarray*}
-m_{{21}}^{(1,1)}(\mathbf{x};q,1+r)&=&2\,e_1(\mathbf{x})^{3}+ \left( {q}^{2}r+q{r}^{2}+3\,{q}^{2}+4\,q\,r +2\,{r}^{2}+5\,q+6\,r \right) e_{{1}}(\mathbf{x})e_{{2}}(\mathbf{x})\\
&& +
 \left( {q}^{3}r +{q}^{2}{r}^{2}+q{r}^{3}+3\,{q}^{3}+3
\,{q}^{2}r +4\,q{r}^{2}+2\,{r}^{3}+5\,q\,r+4\,{r}^{2} \right) e_{{3}}(\mathbf{x}).
\end{eqnarray*}
Positive answers to these and \qref{question_mult} would imply many relations between $e$-positive expression.
For instance, one obtains Bizley-like formulas in the form
\begin{eqnarray}
   \bleu{\sum_{d\geq 0}  e_d^{(a,b)}(\mbf{x};q,1)\, z^d}&=&\bleu{\exp\!\Big( \sum_{j\geq 1} (-1)^{j-1} p_j^{(a,b)}(\mbf{x};q,1)\, z^j/j \Big)},\qquad {\rm and}\label{relBizley}\\
    \bleu{\sum_{k\geq 0}  h_d^{(a,b)}(\mbf{x};q,1)\, z^k}&=&\bleu{\exp\!\Big( \sum_{j\geq 1}  p_j^{(a,b)}(\mbf{x};q,1)\, z^j/j \Big)}.
 \end{eqnarray}   
Other interesting observations concern the compositional $(ad,bd)$-shuffle conjecture of~\cite{compositionalshuffle}, specialized at $t=1$. Indeed, as discussed in \cite{HMZ},  the evaluation at $1$ of the operator $\mbf{C}_\alpha$  specializes, at $t=1$, to the product of the $(-1)^{k-1}h_k$ with $k$ running over parts of $\alpha$, where $\alpha$ is a composition of $d$. Hence, modulo our above observations and if~\pref{shuffle_conjecture} holds, we should have
   \begin{equation}\label{retours}
     \bleu{C_\alpha^{(a,b)}(\mbf {x};q,1)=\sum_{\gamma} q^{\area(\gamma)} e_{\rho(\gamma)}},
   \end{equation}
 where $\gamma$ runs over the set of $(m,n)$-Dyck paths that return to the diagonal at the points 
     $$(a\,\alpha_i,b\,\alpha_i),\qquad {\rm for}\quad \alpha_i=k_1+\ldots+k_i,$$ 
with $i$ varying between $0$ and $\ell$. Thus, some cases that are common to \pref{eposm} and \pref{retours} are consequences of the \pref{shuffle_conjecture}, in particular this is so for $e_{m,n}(\mbf{x};q,1)$.

Using the combinatorial interpretation~\pref{retours}, we may readily see that the specialization at $t=1$ of \pref{S_Inclusion} and \pref{InclusionEH} hold. In fact, the relevant differences are in fact $e$-positive, since we have inclusion between the sets of paths enumerated by each expression. Hence we get the following.
\begin{proposition} For all $m$ and $n$,
\begin{eqnarray}
   \bleu{q^\alpha\,e_{m-1,n}(\mbf{x};q,1)} &\leq_{\rouge{e}}&\bleu {e_{m,n}(\mbf{x};q,1)},\qquad {\rm and}\\
   \bleu{q^\beta\,e_{m-1,n}(\mbf{x};q,1)} &\leq_{\rouge{e}}&\bleu {(-qt)^{1-d}h_{m,n}(\mbf{x};q,1)}.
\end{eqnarray}
    \end{proposition}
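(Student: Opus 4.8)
The plan is to prove both inequalities at once, by realizing each claimed difference as a manifestly positive sub-sum of the combinatorial expansion on its right-hand side, the bridge being an area-shifting, riser-preserving injection of rectangular Dyck paths. Throughout I work at $t=1$ and take for granted the combinatorial interpretation \pref{conjBGLX}, \pref{retours}, which gives
\[
 e_{m,n}(\mbf{x};q,1)=\sum_{\gamma\in\Dyck{m}{n}} q^{\area(\gamma)}\,e_{\rho(\gamma)}(\mbf{x}),\qquad
 e_{m-1,n}(\mbf{x};q,1)=\sum_{\delta\in\Dyck{m-1}{n}} q^{\area(\delta)}\,e_{\rho(\delta)}(\mbf{x}),
\]
the second expansion being the same formula applied to the (possibly non-coprime) pair $(m-1,n)$.

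For the first inequality I would introduce the map $\phi\colon\Dyck{m-1}{n}\to\Dyck{m}{n}$ that appends one final horizontal (East) step, as in the discussion following \pref{E_Inclusion}. I must check three properties. First, $\phi$ is injective, with image exactly the set of $(m,n)$-Dyck paths whose last step is horizontal (one recovers $\delta$ by deleting that step). Second, $\phi$ preserves the riser data, $\rho(\phi(\delta))=\rho(\delta)$, since a terminal East step neither starts a new vertical run nor lengthens an existing one. Third, and most importantly, $\phi$ shifts the area by the fixed amount $\alpha=\alpha(m,n)$ equal to the number of cells lying between the $(m-1,n)$- and $(m,n)$-staircases, independently of $\delta$. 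Granting these,
\[
 e_{m,n}(\mbf{x};q,1)-q^{\alpha}\,e_{m-1,n}(\mbf{x};q,1)=\sum_{\gamma\in\Dyck{m}{n}\setminus\phi(\Dyck{m-1}{n})} q^{\area(\gamma)}\,e_{\rho(\gamma)}(\mbf{x}),
\]
which is a sum of terms $q^{j}e_{\mu}$ with $j\in\N$, hence $e$-positive.

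The second inequality is proved by the same device, with the target replaced by the subset $\Dyck{m}{n}^{\circ}\subseteq\Dyck{m}{n}$ of paths meeting the diagonal only at their endpoints; by the $t=1$ reduction of the modified constant term (divide $\mbf{z}_{m,n}$ by $z_1\cdots z_m$) this subset is enumerated by $(-qt)^{1-d}h_{m,n}(\mbf{x};q,1)$ in the same area/riser weighting. One checks that the append-East map carries $\Dyck{m-1}{n}$ into $\Dyck{m}{n}^{\circ}$: since the $(m-1,n)$-diagonal $y=\tfrac{n}{m-1}x$ is strictly steeper than the $(m,n)$-diagonal $y=\tfrac{n}{m}x$, every path $\delta$ sits strictly above the latter at each interior column, so $\phi(\delta)$ cannot return to the $(m,n)$-diagonal before its end. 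The map still preserves $\rho$, now shifts the area by the constant $\beta=\alpha(m,n)-d+1$ measured against the diagonal-avoiding staircase, and the resulting difference is again a positive sub-sum, hence $e$-positive.

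The main obstacle is the uniform area computation, namely property (iii) above and its $\beta$-analogue. One must verify that re-measuring the area of a path against the $(m,n)$-staircase rather than the $(m-1,n)$-staircase (the corner data of the two staircases being encoded by the floor exponents of $\mbf{z}_{m,n}$ and $\mbf{z}_{m-1,n}$) alters the weight of every $\delta$ by the same constant, rather than by a path-dependent amount. Pinning down this uniform shift, so that the injection is genuinely weight-correcting, is the crux; once it is established, the injectivity, the riser-preservation, and the inclusion into $\Dyck{m}{n}^{\circ}$ are straightforward.
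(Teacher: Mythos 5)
Your proposal is essentially the paper's own argument: granting the $t=1$ combinatorial interpretations \pref{conjBGLX} and \pref{retours}, both differences are realized as manifestly positive sub-sums via the append-a-final-East-step injection, which preserves risers and carries $(m-1,n)$-Dyck paths into $(m,n)$-Dyck paths (into the primitive ones, for the second inequality), exactly as in the discussion surrounding \pref{E_Inclusion} and \pref{InclusionEH}. The uniform area shift you single out as the crux is in fact immediate from the word encoding and definition \pref{defn_area}: the injection leaves the word $a_1a_2\cdots a_n$ unchanged, so the two areas differ by $\sum_{k=1}^{n}\left(\lfloor (k-1)m/n\rfloor-\lfloor (k-1)(m-1)/n\rfloor\right)=\alpha(m,n)$ independently of the path, and likewise the shift against the diagonal-avoiding minimal path is the constant $\beta=\alpha(m,n)-d+1$.
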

This raises the question of whether we have the stronger $e$-positivity property considered earlier in other instances, namely
\begin{question}\label{questione_e_posr}
Can we prove that 
\begin{eqnarray}
   \bleu{q^\alpha\,e_{m-1,n}(\mbf{x};q,1+r)} &\leq_{\rouge{e}}&\bleu {e_{m,n}(\mbf{x};q,1+r)},\qquad {\rm and}\\
   \bleu{q^\beta\,e_{m-1,n}(\mbf{x};q,1+r)} &\leq_{\rouge{e}}&\bleu {(-qt)^{1-d}h_{m,n}(\mbf{x};q,1+r)}.
\end{eqnarray}
  for all $m,n\geq 1$, and explain this combinatorially?
\end{question}\noindent
These statements have been explicitly checked to hold for all $m,n\leq 8$.

%

\section{Specialization at \texorpdfstring{$q=t=1$}{qt}}
We simplify our notation in this section, writing $f_d^{(a,b)}(\mbf{x})$ instead of $f_d^{(a,b)}(\mbf{x};1,1)$, and follow the logic of our previous conventions so that
   $$\bleu{q_\mu^{(a,b)}(\mbf{x}) := q_{\mu_1}^{(a,b)}(\mbf{x})q_{\mu_2}^{(a,b)}(\mbf{x})\cdots q_{\mu_\ell}^{(a,b)}(\mbf{x})}.$$
Once again we assume that $(m,n)=(ad,bd)$, with $(a,b)$ a coprime pair.
Then, an argument similar to that of \cite{bizley} (using \pref{relBizley} and \pref{conjBGLX}) shows that
\begin{eqnarray}
    \bleu{q_d^{(a,b)}(\mbf{x})}
       &=&\bleu{(-1)^{d-1}p_d^{(a,b)}(\mbf{x})}\\
       &=&\bleu{\frac{d}{m}\, e_n[m\,\mbf{x}]  =  \frac{1}{a} e_{db}[da\,\mbf{x}] },\label{formule_q}
\label{Qchar}
    \end{eqnarray}
    so that, using the multiplicativity property~\pref{multiplicativite},
\begin{equation}
    \bleu{q_\mu^{(a,b)}(\mbf{x})  = \prod_{k\in\mu} \frac{1}{a} e_{kb}[ka\,\mbf{x}]}.
 \end{equation}
From this it follows  that we have a generalized Bizley-like formula
\begin{eqnarray}
   \bleu{f_d^{(a,b)}(\mbf{x})}
       &=& \bleu{\sum_{\mu\vdash d} f_\mu\, q_\mu^{(a,b)}(\mbf{x})}\nonumber\\
       &=& \bleu{\sum_{\mu\vdash d} f_\mu\, \prod_{k\in\mu} \frac{1}{a} e_{kb}[ka\,\mbf{x}]},\label{formule}
 \end{eqnarray}
if we have the expansion
    $$\bleu{f_d(\mbf{x}) = \sum_{\mu\vdash d} f_\mu\,q_\mu(\mbf{x})}.$$
 For example,  
 \begin{eqnarray*}
     e_3^{(a,b)}(\mbf{x}) &=& \frac{1}{3} q_3^{(a,b)}(\mbf{x})+\frac{1}{2}q_2^{(a,b)}(\mbf{x})\,q_1^{(a,b)}(\mbf{x})+
                   \frac{1}{6} q_1^{(a,b)}(\mbf{x})^3,\\
                &=& \frac{1}{3\,a}\, e_{{3\,b}} [ 3\,a\,\mbf{x} ]
                  +\frac{1}{2\,a^2}\,e_{{2\,b}} [ 2\,a\,\mbf{x} ] \,e_{{b}} [ a\,\mbf{x} ] 
                  +\frac {1}{6\,a^3} \left( e_{{b}} [a\,\mbf{x}]  \right) ^{3},\\
       -s_{21}^{(a,b)}(\mbf{x})&=& \frac{1}{3} q_3^{(a,b)}(\mbf{x})-\frac{1}{3} q_1^{(a,b)}(\mbf{x})^3,\\
                   &=& \frac{1}{3\,a}\, e_{{3\,b}} [ 3\,a\,\mbf{x} ] 
                  -\frac{1}{3\,a^3} \left( e_{{b}} [ a\,\mbf{x} ]  \right) ^{3},\\
     h_3^{(a,b)}(\mbf{x})&=&\frac{1}{3} q_3^{(a,b)}(\mbf{x})-\frac{1}{2}q_2^{(a,b)}(\mbf{x})q_1^{(a,b)}(\mbf{x})+
                   \frac{1}{6} q_1^{(a,b)}(\mbf{x})^3,\\
                   &=&\frac{1}{3\,a}\, e_{{3\,b}} [ 3\,a\,\mbf{x} ] 
                  -\frac{1}{2\,a^2}\, e_{{2\,b}} [ 2\,a\,\mbf{x} ]\,e_{{b}} [ a\,\mbf{x} ]  
                  +\frac {1}{6\,a^3} \left( e_{{b}} [ a\,\mbf{x} ]  \right) ^{3}.
 \end{eqnarray*}
Let us now consider the
linear transformations on symmetric functions
\begin{eqnarray}
      \bleu{\delta(g(\mbf{x}))}&:=&\bleu{\langle p_1(\mbf{x})^n,g(\mbf{x})\rangle },\\
     \bleu{\varepsilon(g(\mbf{x}))}&:=&\bleu{\langle e_n(\mbf{x}),g(\mbf{x})\rangle },
  \end{eqnarray}
for which we clearly have
\begin{eqnarray*}   
    \bleu{\delta(g_{d_1}(\mbf{x}))g_{d_2}(\mbf{x}))\cdots g_{d_\ell}(\mbf{x})) }&=&\bleu{\binom{n}{d_1,d_2,\ldots,d_\ell} \prod_{i=1}^\ell
           \delta(g_{d_i}(\mbf{x}))},\qquad {\rm and}\\
      \bleu{\varepsilon(g_{d_1}(\mbf{x}))g_{d_2}(\mbf{x}))\cdots g_{d_\ell}(\mbf{x})) }&=&\bleu{\prod_{i=1}^\ell
           \varepsilon(g_{d_i}(\mbf{x}))},
   \end{eqnarray*}
   where $n=d_1+d_2+\ldots+d_\ell$.
 Also recall that 
\begin{eqnarray*}
     \bleu{\delta(f_d^{(a,b)}(\mbf{x}))}&=&\bleu{\dim(M^{(a,b)}_{f_d})},\qquad {\rm and}\\
      \bleu{\varepsilon(f_d^{(a,b)}(\mbf{x}))}&=&\bleu{\dim\left(M^{(a,b)}_{f_d}\right)^\pm},
    \end{eqnarray*}
 whenever $f_d^{(a,b)}(\mbf{x})$ may be interpreted as the Frobenius characteristic of some ${\mathbb S}_n$-module $M^{(a,b)}_{f_d}$,
 with $(M^{(a,b)}_{f_d})^\pm$ standing for the alternating isotypic component of this ${\mathbb S}_n$-module.
 Since
     $$\bleu{\delta(q_d^{(a,b)}(\mbf{x})) =d\,m^{n-1}= d^{bd} a^{bd-1}},$$
 and
     $$\bleu{\varepsilon(q_d^{(a,b)}(\mbf{x}))=\frac{d}{m+n}\,\binom{n+m}{n}=\frac{1}{a+b}\binom{(a+b)d}{bd}},$$
 for any partition $\mu$ of $d$, with $(m,n)=(ad,bd)$ and $d=\gcd(m,n)$ as before, we have
\begin{eqnarray*}
    \bleu{\delta(q_\mu^{(a,b)}(\mbf{x}))}&=&\bleu{ \binom{n}{a\mu} a^{n-\ell(\mu)}\prod_{k\in\mu} k^{bk}},\\
    \bleu{\varepsilon(q_\mu^{(a,b)}(\mbf{x}))}&=&\bleu{\frac{1}{(a+b)^{\ell(\mu)}}\prod_{k\in\mu} \binom{(a+b)k}{bk}},
 \end{eqnarray*}  
where we use\footnote{Observe that $a\mu$ is a partition of $n$, with parts $a\mu_i$.} the partition multinomial notation
  $$\bleu{\binom{n}{a\,\mu} := \frac{n!}{(a\,\mu_1)! \cdots (a\,\mu_\ell)!}}.$$
  Thus, for $M^{(a,b)}_{f_d}$ the be the required $\S_n$-module would have to have the dimension formulas
\begin{eqnarray}
    \bleu{\dim(M^{(a,b)}_{f_d})}&=&\bleu{\sum_{\mu\vdash d} f_\mu \binom{n}{a\,\mu} a^{n-\ell(\mu)}\prod_{k\in \mu} k^{kb-1}}, \qquad {\rm and}\\
    \bleu{\dim(M^{(a,b)}_{f_d})^\pm}&=&\bleu{\sum_{\mu\vdash d} \frac{ f_\mu} {(a+b)^{\ell(\mu)}}\prod_{k\in\mu} \binom{(a+b)k}{bk}},
    \label{e_multiplicites}
 \end{eqnarray}
with the coefficients $f_\mu$ coming from the expansion \pref{formule}.
Observe that, in view of the dual Cauchy formula, the right-hand side of \pref{formule_q} affords a positive integer coefficient expansion in the $e$-basis given by the formula
\begin{eqnarray*}
     \bleu{\frac{d}{m}\, e_n[m\,\mbf{x}]} 
          &=&
          \bleu{\sum_{\lambda\vdash n} e_\lambda(\mbf{x})\,\frac{d}{m}\,h_\lambda[m]}\\
          &=&
          \bleu{\sum_{\lambda\vdash n} e_\lambda(\mbf{x})\,\frac{d}{m}\prod_{k\in \lambda}\binom{m+k-1}{k}},
 \end{eqnarray*}
 with $d=\gcd(m,n)$ as before.
Recalling that $\langle e_n(\mbf{x}),e_\lambda(\mbf{x})\rangle=1$ for all partition $\lambda$ of $n$, it follows that the sum of the coefficients of \pref{formule}, when expanded in the $e$-basis, must be equal to the number of copies of the alternating representations in $M_{f_d}$. In other words, it is the dimension of $(M^{(a,b)}_{f_d})^\pm$, as given by \pref{e_multiplicites}.

 \subsection*{Other specializations}
Some other possibilities of specializing $q$ and $t$ have been considered in the ``classical'' context of $e_{n,n}(\mbf{x};q,t)$, and then taking scalar product with $p_1^n$. For instance, in ~\cite{postnikov}, the authors set $t=-1$ and $q=1$, for which they get interesting combinatorial considerations. A similar specialization, followed by a scalar product with $p_1^n$, seems to give rise to many interesting combinatorial questions when one considers $f_d^{(a,b)}(\mbf{x};q,t)$ for seeds such as considered here.

\section*{Appendix A: Combinatorics of \texorpdfstring{$(m,n)$-Dyck}{mnDyck}  paths}
Recall that an \define{$(m,n)$-Dyck paths}  is a south-east lattice path, going from $(0,n)$ to $(m,0)$, which stays above the \define{$(m,n)$-diagonal}. This is the line segment joining $(0,n)$ to $(m,0)$. See Figure~\ref{fig1} for an example. 
\begin{figure}[ht]
\setlength{\unitlength}{4mm}
\setlength{\carrelength}{4mm}
\def\jcarre{\put(0,0){\jaune{\linethickness{\carrelength}\line(1,0){1}}}}
\def\palecarre{\bleupale{\linethickness{\unitlength}\line(1,0){1}}}
\begin{center}
\begin{picture}(11,5)(0,0)
\put(0,.5){\multiput(0,2)(1,0){3}{\jcarre}
                   \multiput(0,1)(1,0){6}{\jcarre}
                   \multiput(0,0)(1,0){7}{\jcarre}}
\multiput(0,0)(0,1){6}{\line(1,0){10}}
\multiput(0,0)(1,0){11}{\line(0,1){5}}
\thicklines
\put(-.6,5.5){$\scriptscriptstyle(0,5)$}
\put(10,-.6){$\scriptscriptstyle(10,0)$}
 \put(0,5){\bleu{\line(2,-1){10}}}
  \linethickness{.5mm}
\put(0,5){\rouge{\line(0,-1){1}}}\put(-1,0.2){$7$}
\put(0,4){\rouge{\line(0,-1){1}}}\put(-1,1.2){$6$}
\put(0,3){\rouge{\line(1,0){3}}}
\put(3,3){\rouge{\line(0,-1){1}}}\put(-1,2.2){$3$}
\put(3,2){\rouge{\line(1,0){3}}}
\put(6,2){\rouge{\line(0,-1){1}}}\put(-1,3.2){$0$}
\put(6,1){\rouge{\line(1,0){1}}}
\put(7,1){\rouge{\line(0,-1){1}}}\put(-1,4.2){$0$}
\put(7,0){\rouge{\line(1,0){3}}}
\end{picture}\end{center}
\caption{The $(10,5)$-Dyck path encoded as $00367$.}
\label{fig1}
\end{figure}
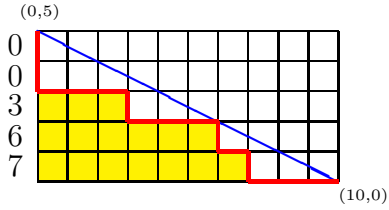

We encode such paths as (weakly) increasing integer sequences (words)
   $$\gamma=a_1a_2\cdots a_n,\qquad {\rm with}\quad 0\leq a_k\leq (k-1)\,m/n.$$
 Each $a_k$ gives the distance between the $y$-axis of the (unique) south step that starts at level $n+1-k$.  If $d=\gcd(m,n)$, we may only have equality $a_k=(k-1)\,m/n$ when $k=j\,b+1$, for $0<j<d$. If this is the case, we say that we have a \define{return} to the diagonal at position $k$. The set of such return positions may be encoded as a composition of $d$. This uses the classical correspondence between subsets of $\{1,\ldots,d-1\}$ and compositions $\alpha$ of $d$. To a composition $\alpha=(c_1,\ldots,c_k)$ this correspondence associates the set of partial sums $S(\alpha)=\{s_1,s_2,\ldots,s_k\}$, where
    $$s_i=c_1+c_2 +\cdots+c_i,\qquad {\rm with}\qquad 1\leq i<k.$$
The $(m,n)$-Dyck that stays ``closest'' to the diagonal is called the \define{$(m,n)$-staircase path} 
   \begin{equation}\label{defstaircase}
      \bleu{\delta_{m,n}:=d_1d_2\cdots d_n},\qquad{\rm with}\qquad  \bleu{d_k:= \lfloor (k-1)\,m/n\rfloor}.
   \end{equation} 
 For example, we have
$$\begin{array}{rclrclrclrcl}
\delta_{{1,4}}=0000,& \delta_{{2,4}}=0011,& \delta_{{3,4}}=0012,& \delta_{{4,4}}=0123,\\[4pt]
\delta_{{5,4}}=0123,& \delta_{{6,4}}=0134,& \delta_{{7,4}}=0135,& \delta_{{8,4}}=0246,\\[4pt]
\delta_{{9,4}}=0246,& \delta_{{10,4}}=0257,& \delta_{{11,4}}=0258,& \delta_{{12,4}}=0369.
\end{array}$$
It is easy to check that $\delta_{kn,n}=\delta_{kn+1,n}$. We denote by $\Dyck{m}{n}$, the set of $(m,n)$-Dyck paths, and by $\Cat{m}{n}$ its cardinality. For example, we have 
 $$\Dyck{5}{4}=\{ 0000, 0001, 0002, 0003, 0011, 0012, 0013, 0022, 0023, 0111, 0112, 0113, 0122, 0123\}.$$
 It follows from the observation that $\delta_{kn,n}=\delta_{kn+1,n}$, that we have the set equality
\begin{equation}\label{observation}
   \bleu{\Dyck{kn}{n}=\Dyck{kn+1}{n}}.
 \end{equation}
When $m$ and $m$ are coprime, the enumeration of $(m,n)$-Dyck path is given by the ``well'' known formula  \begin{displaymath}
   \bleu{\Cat{m}{n} =\frac{1}{m+n}\binom{m+n}{n}}.
\end{displaymath}
For the more general situation, when $m$ and $n$ have greatest common divisor $d\geq 1$, the formula was obtained by Bizley~\cite{bizley} in 1954. His argument may be given a more general understanding, using a symmetric function encoding of 
the multiplicities of parts in $(m,n)$-Dyck paths. To this end, we consider the \define{riser composition} $\rho(\gamma)$ of a path $\gamma$, which is simply the sequence of multiplicities of the entries of $\gamma$. We may then count $(m,n)$-paths with weight $e_{\rho(\gamma)}(\mathbf{x}):=e_{r_1}(\mathbf{x})e_{r_2}(\mathbf{x})\cdots e_{r_k}(\mathbf{x})$, if $\rho(\gamma)=r_1r_2\cdots r_k$.

Let $(m,n)=(ad,bd)$, with $a$ and $b$ coprime. It may be shown that (see~\cite{compositionalshuffle})
\begin{displaymath}
     	   \bleu{q_d^{(a,b)}(\mathbf{x};1,1):= \frac{d}{m}\,e_{n}[m,\mathbf{x}]},
\end{displaymath} 
in which one considers $m$ as a constant\footnote{This means that $p_k[m\mbf{x}]=m\,p_k(\mbf{x})$.} for the pletystic evaluation of the right-hand side.  
Then, a symmetric function version of Bizley's formula may be written as
  \begin{equation}\label{bizley_formula}
   \bleu{\sum_{\mu\vdash d} q_\mu^{(a,b)}(\mathbf{x};1,1)/{z_\mu} = \sum_{\gamma\in\Dyck{ad}{bd}} e_{\rho(\gamma)}(\mathbf{x})
             }.
\end{equation}
Recall that, for a partition $\mu$ of $d$ having $c_i$ parts of size $i$, the integers $z_\mu$ are defined as
		\begin{displaymath} \bleu{z_\mu:=\prod_k k^{c_k}\, c_k!}\end{displaymath}
Expressed in generating function terms, formula~\ref{bizley_formula}  takes the form
\begin{equation}\label{bizley_gen}
   \bleu{ \sum_{d=0}^\infty \sum_{\gamma\in\Dyck{ad}{bd}} e_{\rho(\gamma)}(\mathbf{x})\,x^d = \exp\!\left(\sum_{k\geq 1} \frac{1}{a}\,e_{bk}[ak\,\mathbf{x}] \frac{x^k}{k}\right)}.
\end{equation} 
For example, we have
\begin{eqnarray*}
  \bleu{\sum_{\gamma\in\Dyck{2a}{2b}} e_{\rho(\gamma)}(\mathbf{x})}&=& \bleu{\frac{1}{2} \left(\frac{1}{a}e_{b}[a\,\mathbf{x}]\right)^2+\frac{1}{2}\left(\frac{1}{a}e_{2b}[2a\,\mathbf{x}]\right)},\\[4pt]
  \bleu{\sum_{\gamma\in\Dyck{3a}{3b}} e_{\rho(\gamma)}(\mathbf{x})}&=&\bleu{\frac{1}{6}\left(\frac{1}{a}e_{b}[a\,\mathbf{x}]\right)^3
  +\frac{1}{2}\left(\frac{1}{a}e_{b}[a\,\mathbf{x}]\right)\left(\frac{1}{a}e_{2b}[2a\,\mathbf{x}]\right)}\\
  &&\qquad \qquad\bleu{+\frac{1}{3}\left(\frac{1}{a}e_{3b}[3a\,\mathbf{x}]\right).
}
\end{eqnarray*}
One obtains Bizley's formula as the coefficient of $e_n(\mathbf{x})$ in the resulting elementary symmetric function expansion.
Bizley also obtained a formula for the number of \define{primitive} $(ad,bd)$-Dyck paths. These are the paths that remain strictly above the diagonal (except at both ends). The symmetric function enumerator for these is
  \begin{eqnarray}\label{bizley_primitif}
     	   \bleu{h_d(\mbf{x};1,1)}&=&\bleu{\sum_{\mu\vdash d}{p^{(a,b)}_\mu(\mathbf{x};1,1)}/{z_\mu}}\nonumber\\
	      &=&\bleu{\frac{1}{a}\,h_{bk}[ak\,\mathbf{x}]}.
\end{eqnarray} 
From this, we may easily enumerate $(m,n)$-Dyck paths with specified return positions to the diagonal.

\subsection*{Area of \texorpdfstring{$(m,n)$}--Dyck paths}
The \define{area} of an $(m,n)$-Dyck path $\alpha$ is the number of \define{cells}\footnote{These are the $1\times 1$ squares in the $\N\times \N$-grid, and they are labeled by their southwest corner.} lying entirely between the path $\alpha$ and the $(m,n)$-staircase:
    \begin{equation}\label{defn_area}
          \bleu{\area_{m,n}(\alpha)}:=\bleu{\sum_{i=k}^n d_k-a_k},
      \end{equation}
 where the $\delta_{m,n}=d_1\cdots d_n$ is the $(m,n)$-staircase. 
In particular, $\delta_{m,n}$ is the unique $(m,n)$-Dyck path having area zero. 

\setlength{\unitlength}{4mm}
\newdimen\carrelength
\setlength{\carrelength}{3.5mm}
\begin{figure}[ht]
$$\begin{array}{ccccc}
     \begin{picture}(4,3)(0,0)
\multiput(0,1)(0,1){3}{\line(1,0){3}}
\multiput(1,0)(1,0){3}{\line(0,1){3}}
 \put(0,0){\line(1,0){3}}
 \put(0,0){\line(0,1){3}}
  \put(0,3){\bleu{\line(1,-1){3}}}
  \linethickness{.5mm}
\put(0,3){\rouge{\line(0,-1){1}}}
\put(0,2){\rouge{\line(1,0){1}}}
\put(1,2){\rouge{\line(0,-1){1}}}
\put(1,1){\rouge{\line(1,0){1}}}
\put(2,1){\rouge{\line(0,-1){1}}}
\put(2,0){\rouge{\line(1,0){1}}}
\end{picture}        &
     \begin{picture}(4,3)(0,0)
     \multiput(1.1,0.5)(1,0){1}{\jcarre}
\multiput(0,1)(0,1){3}{\line(1,0){3}}
\multiput(1,0)(1,0){3}{\line(0,1){3}}
 \put(0,0){\line(1,0){3}}
 \put(0,0){\line(0,1){3}}
  \put(0,3){\bleu{\line(1,-1){3}}}
  \linethickness{.5mm}
\put(0,3){\rouge{\line(0,-1){1}}}
\put(0,2){\rouge{\line(1,0){1}}}
\put(1,2){\rouge{\line(0,-1){2}}}
\put(1,0){\rouge{\line(1,0){2}}}
\end{picture}        &
     \begin{picture}(4,3)(0,0)
     \multiput(0.1,1.5)(1,0){1}{\jcarre}
\multiput(0,1)(0,1){3}{\line(1,0){3}}
\multiput(1,0)(1,0){3}{\line(0,1){3}}
 \put(0,0){\line(1,0){3}}
 \put(0,0){\line(0,1){3}}
  \put(0,3){\bleu{\line(1,-1){3}}}
  \linethickness{.5mm}
\put(0,3){\rouge{\line(0,-1){2}}}
\put(0,1){\rouge{\line(1,0){2}}}
\put(2,1){\rouge{\line(0,-1){1}}}
\put(2,0){\rouge{\line(1,0){1}}}
\end{picture}       &
     \begin{picture}(4,3)(0,0)
          \multiput(1.1,0.5)(1,0){1}{\jcarre}
         \multiput(0.1,1.5)(1,0){1}{\jcarre}
\multiput(0,1)(0,1){3}{\line(1,0){3}}
\multiput(1,0)(1,0){3}{\line(0,1){3}}
 \put(0,0){\line(1,0){3}}
 \put(0,0){\line(0,1){3}}
  \put(0,3){\bleu{\line(1,-1){3}}}
  \linethickness{.5mm}
\put(0,3){\rouge{\line(0,-1){2}}}
\put(0,1){\rouge{\line(1,0){1}}}
\put(1,1){\rouge{\line(0,-1){1}}}
\put(1,0){\rouge{\line(1,0){2}}}
\end{picture}        &
     \begin{picture}(4,3)(0,0)
        \multiput(0.1,0.5)(1,0){2}{\jcarre}
         \multiput(0.1,1.5)(1,0){1}{\jcarre}
\multiput(0,1)(0,1){3}{\line(1,0){3}}
\multiput(1,0)(1,0){3}{\line(0,1){3}}
 \put(0,0){\line(1,0){3}}
 \put(0,0){\line(0,1){3}}
  \put(0,3){\bleu{\line(1,-1){3}}}
  \linethickness{.5mm}
\put(0,3){\rouge{\line(0,-1){3}}}
\put(0,0){\rouge{\line(1,0){3}}}
\end{picture}\\
0 & 1 & 1 & 2 & 3
       \end{array}$$
       \vskip-10pt
       \caption{The areas of $(3,3)$-Dyck paths.}
\label{qcat3}
\end{figure}
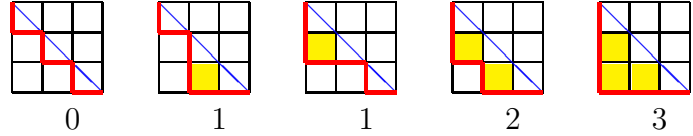

\subsection*{Parking functions, and dinv statistic}
An $(m,n)$-parking function is simply a permutation of the entries of an $(m,n)$-Dyck path. It may be represented as a labeling of the south steps of the path. To this end, a step is labeled $i$ if the corresponding entry appears in the $i^{\rm th}$-position  in a parking function $\pi$. If this step starts at $(x,y)$, we write $\pi(x,y)=i$. In other words, $i$ appears in the cell having coordinates $(x,y)$. This is illustrated in Figure~\ref{fig_park}, for the parking functions such that $\pi(0,0)=2$, $\pi(0,1)=4$, $\pi(3,2)=3$, $\pi(6,3)=1$, and $\pi(7,4)=5$.
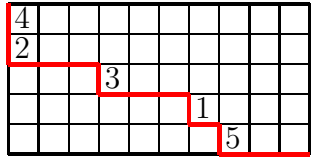
\begin{figure}[ht]
\begin{center}
\begin{picture}(11,5)(0,0)
\multiput(0,1)(0,1){5}{\line(1,0){10}}
\multiput(1,0)(1,0){10}{\line(0,1){5}}
\thicklines
 \put(0,0){\line(1,0){10}}
 \put(0,5){\line(1,0){10}}
 \put(0,0){\line(0,1){5}}
 \put(10,0){\line(0,1){5}}
  \linethickness{.5mm}
\put(0,5){\rouge{\line(0,-1){1}}\put(0.2,-.8){4}}
\put(0,4){\rouge{\line(0,-1){1}}\put(0.2,-.8){2}}
\put(0,3){\rouge{\line(1,0){3}}}
\put(3,3){\rouge{\line(0,-1){1}}\put(0.2,-.8){3}}
\put(3,2){\rouge{\line(1,0){3}}}
\put(6,2){\rouge{\line(0,-1){1}}\put(0.2,-.8){1}}
\put(6,1){\rouge{\line(1,0){1}}}
\put(7,1){\rouge{\line(0,-1){1}}\put(0.2,-.8){5}}
\put(7,0){\rouge{\line(1,0){3}}}
\end{picture}\end{center}
\caption{The $(10,5)$-parking function $60307$.}
\label{fig_park}
\end{figure}

As illustrated in Figure~\ref{table_rank}, the $(m,n)$-\define{rank} of a cell $(x,y)$ is defined as being equal to $\rank(x,y):=n\,m-y\,m-x\,n$. 
\begin{figure}[ht]
$$
\begin{array} {rrrrrrrrrr}
\vdots &\vdots&\vdots&\vdots&\vdots&\vdots&\vdots&\vdots\\ \noalign{\medskip}
\rouge{0}&-5&-10&-15&-20&-25&-30&-35&\cdots\\ \noalign{\medskip}
\bleu{7}&\rouge{2}&-3&-8&-13&-18&-23&-28&\cdots\\ \noalign{\medskip}
14&\bleu{9}&\rouge{4}&-1&-6&-11&-16&-21&\cdots\\ \noalign{\medskip}
21&16&\bleu{11}&\bleu{6}&\rouge{1}&-4&-9&-14&\cdots\\ \noalign{\medskip}
28&23&18&13&\bleu{8}&\rouge{3}&-2&-7&\cdots\\ \noalign{\medskip}
35&30&25&20&15&\bleu{10}&\bleu{5}&\rouge{0}&\cdots
\end {array}
$$
\caption{Examples of $(m,n)$-ranks (with $m=7$ and $n=5$).}\label{table_rank}
\end{figure}
The \define{descent set} $\mathrm{des}(\pi)$ of a parking function $\pi$ is the set of $i$ ($<n$) for which $i+1$ sits in a cell of lower or equal rank to that of the cell in which $i$ appears, hence
    $$ \mathrm{des}(\pi):=\{ i\ |\ \pi(x,y)=i,\ \pi(u,v)=i+1,\  \rank(x,y)\geq \rank(u,v)\}.$$
We write $\comp(\pi)$ for the composition of $n$ that encodes this subset of $\{1,\ldots,n-1\}$. In the next section, we will need to consider composition indexed Schur functions. These are obtained by extending to compositions the classical Jacobi-Trudi formula. More explicitly, for a composition $\alpha=(c_1,\cdots,c_k)$, one sets
   $$\bleu{s_\alpha(\mathbf{x}):=\det(h_{c_i-i+j}(\mathbf{x}))_{1\leq i,j\leq k}}.$$
   It may easily be seen that this evaluates either to $0$, or to a single Schur function up to a sign.


\begin{thebibliography}{10}  
\bibitem{armstrong}
\auteur{D.~Armstrong, N.~Loehr, and G.~Warrington},
\titreref{Rational Parking Functions and Catalan Numbers},
\href{http://arxiv.org/abs/1403.1845v1}{arXiv:1403.1845}, (2014).

\bibitem{bergeron}
\auteur{F.~Bergeron},
\titreref{Algebraic Combinatorics and Coinvariant Spaces}, CMS Treatise in Mathematics, CMS and A.K.Peters,  2009.

 \bibitem{IdPosCon}
\auteur{F.~Bergeron, A.~M. Garsia, M.~Haiman, and G.~Tesler},
\titreref{Identities and Positivity Conjectures for Some Remarkable Operators
  in the Theory of Symmetric Functions},
{\bf Methods in Appl. Anal.}, \vol{6} (1999),  363--420.


\bibitem{compositionalshuffle}
\auteur{F.~Bergeron, A.~M. Garsia, E.~Leven,  and G.~Xin},
\titreref{Compositional \lowercase{$(km,kn)$}--Shuffle Conjectures},
\href{http://arxiv.org/abs/1404.4616}{arXiv:1404.4616}, (2014).

\bibitem{newPleth}
\auteur{F.~Bergeron, A.~M. Garsia, E.~Leven, and G.~Xin},
\titreref{Some Remarkable New Plethystic Operators in the Theory of Macdonald Polynomials}.
{\bf In preparation}, 2014.

\bibitem{bizley}
\auteur{T. L. Bizley},
\titreref{Derivation of a new formula for the number of minimal lattice paths from $(0,0)$ to $(km,kn)$ having just $t$ contacts with the line and a proof of GrossmanÕs formula for the number of paths which may touch but do not rise above this line}, J. Inst. Actuar. 80, (1954), 55--62.



\bibitem{GorskyNegut}
\auteur{E.~Gorsky and A.~Negut},
\titreref{Refined Knot Invariants and Hilbert Schemes},
\href{http://arxiv.org/abs/1304.3328}{arXiv:1304.3328}, (2013).

\bibitem{grojnowski}
\auteur{I. Grojnowski and M. Haiman}
\titreref{Affine Hecke Algebras and Positivity of LLT and Macdonald Polynomials}

\bibitem{haiman}
\auteur{M.~Haiman}, 
\titreref{Combinatorics, symmetric functions and Hilbert schemes},  
In CDM 2002: Current Developments in Mathematics in Honor of  Wilfried Schmid \& George Lusztig, International Press Books  (2003), pp.~ 39--112.


\bibitem{HHLRU}
\auteur{J.~Haglund, M.~Haiman, N.~Loehr, J.~Remmel, and A.~Ulyanov},
\titreref{A Combinatorial Formula for the Character of the Diagonal Coinvariants},
Duke Math. J. Volume 126, Number 2 (2005), 195--232.


\bibitem{HMZ}
\auteur{J.~Haglund, J.~Morse, and M.~Zabrocki},
\titreref{A Compositional Refinement of the Shuffle Conjecture Specifying
  Touch Points of the Dyck Path}.
{\bf Canadian J. Math}, \vol{64} (2012), 822--844.



\bibitem{NegutShuffle}
\auteur{A.~Negut},
\titreref{The Shuffle Algebra Revisited},
Int. Math. Res. Notices (2014) (22): 6242--6275.
doi: 10.1093/imrn/rnt156
\href{http://arxiv.org/abs/1209.3349}{arXiv:1209.3349}, (2012).

\bibitem{postnikov}
\auteur{D.~Chebikin and A.~Postnikov},
\titreref{Generalized Parking Functions, Descent Numbers, And Chain Polytopes Of Ribbon Posets},
Advances In Applied Mathematics 44(2) (2010), 145--154. 
\href{http://arxiv.org/abs/0806.0440}{ arXiv:0806.0440}


\bibitem{elliptic2}
\auteur{O.~Schiffmann},
\titreref{On the Hall Algebra of an Elliptic Curve, II},
{\bf Duke Math. J.}, \vol{161-9} (2012), 1711--1750.

\bibitem{SchiffVassMac}
\auteur{O.~Schiffmann and E.~Vasserot},
\titreref{The Elliptical Hall Algebra, Cherednik Hecke Algebras and Macdonald
  Polynomials},
{\bf Compos. Math.}, \vol{147-1} (2011), 188--234.

\bibitem{SchiffVassK}
\auteur{O.~Schiffmann and E.~Vasserot},
\titreref{The elliptical Hall Algebra and the Equivariant $K$-theory of the
  Hilbert Scheme of $\mathbb{A}^2$},
{\bf Duke Math. J.}, \vol{162-2} (2013), 279--366.

\end{thebibliography}
\end{document}